\documentclass[12pt]{amsart}
\usepackage{amsmath,amsthm,latexsym,amscd,amsbsy,amssymb,amsfonts,fleqn,leqno,
euscript, pb-diagram}


\newtheorem{theorem}{Theorem}[section]
\newtheorem{lemma}[theorem]{Lemma}
\newtheorem{proposition}[theorem]{Proposition}
\newtheorem{cor}[theorem]{Corollary}

\theoremstyle{definition}
\newtheorem{definition}[theorem]{Definition}

\newtheorem{question}[theorem]{Question}

\theoremstyle{remark}

\numberwithin{equation}{section}

\begin{document}

\title[Functional extenders and set-valued retractions]
{Functional extenders and set-valued retractions}

\author[Robert Alkins]{Robert Alkins}
\address{North Bay, ON, Canada}
\email{alkins.rob@gmail.com}

\author[V. Valov]{Vesko  Valov}
\address{Department of Computer Science and Mathematics, Nipissing University,
100 College Drive, P.O. Box 5002, North Bay, ON, P1B 8L7, Canada}
\email{veskov@nipissingu.ca}
\thanks{The author was partially supported by NSERC Grant 261914-08.}

\keywords{absolute extensors for one-dimensional spaces, extenders
with compact supports,  hyper-spaces, preserving minimum and maximum
functionals, functions spaces}

\subjclass[2000]{Primary 54C20; Secondary 54C55}

\date{}


\begin{abstract}
We describe the supports of a class of real-valued maps on $C^*(X)$
introduced by Radul \cite{r0}. Using this description, a
characterization of compact-valued retracts of a given space in
terms of functional extenders is obtained. For example, if $X\subset
Y$, then there exists a continuous compact-valued retraction from
$Y$ onto $X$ if and only if there exists a normed weakly additive
extender $u\colon C(^*(X)\to C^*(Y)$ with compact supports
preserving $\min$ (resp., $\max$) and weakly preserving $\max$
(resp., $\min$). Similar characterizations are obtained for upper
(resp., lower) semi-continuous compact-valued retractions. These
results provide characterizations of (not necessarily compact)
absolute extensors for zero-dimensional spaces, as well as absolute
extensors for one-dimensional spaces, involving non-linear
functional extenders.  
\end{abstract}

\maketitle\markboth{}{Extenders and set-valued retractions}



\section{Introduction}
All spaces in the paper are assumed to be Tychonoff. Continuous (and
bounded) real-valued functions on $X$ are denoted, respectively, by
$C(X)$ and $C^*(X)$.

Some purely topological properties have been characterized using
functional extenders. For example, {\em Dugundji spaces} were
defined by Pelczynski \cite{p} in the terms of linear extension
operators between function spaces. Later, Haydon \cite{ha} proved
that a compactum $X$ is a Dugundji space iff it is an absolute
extensor for 0-dimensional spaces, notation $AE(0)$-spaces. Another
results of this type are Shapiro's characterization \cite{sh} of
compact absolute extensors for one-dimensional spaces (br., $AE(1)$)
in terms of extenders between non-negative function spaces and the
second authors's characterization \cite{v3} of (not necessarily
compact) absolute extensor for 0-dimensional spaces. Following this
line, the second author \cite{v1} obtained recently a
characterization of {\em $\kappa$-metrizable} compacta involving
special function extenders.

In this paper we provide another result in this direction by
characterizing set-valued retracts of a given space in terms of
functional extenders. Recall that a map $u\colon C^*(X)\to C^*(Y)$,
where $X$ is a subspace of $Y$, is called an {\em extender} if
$u(f)$ extends $f$ for all $f\in C^*(X)$. Every map $u\colon
C^*(X)\to C^*(Y)$ generates the maps (called {\em functionals})
$\mu_y\colon C^*(X)\to\mathbb R$, $y\in Y$, defined by
$\mu_y(f)=u(f)(y)$. We consider functionals which are {\em normed},
{\em weakly additive}, {\em preserving $\max$ or $\min$} and {\em
weakly preserve $\min$ or $\max$}. This class of functionals was
introduced by Radul \cite{r0}: A functional $\mu\colon
C^*(X)\to\mathbb R$ is said to be (i) normed, (ii) weakly additive,
(iii) preserving $\max$, and (iv) weakly preserving $\min$, if for
every $f,g\in C^*(X)$ and every constant function $c_X$ we have: (i)
$\mu(1_X)=1$, (ii) $\mu(f+c_X)=\mu(f)+c$, (iii)
$\mu(\max\{f,g\})=\max\{\mu(f),\mu(g)\}$, (iv)
$\mu(\min\{f,c_X\})=\min\{\mu(f),c\}$.  We say that $\mu$ {\em
preserves $\min$} provided $\mu$ satisfies equality (iii) with
$\max$ replaced by $\min$. Similarly, $\mu$ {\em weakly preserves
$\max$} if $\mu$ satisfies condition (iv) with $\min$ replaced by
$\max$.

A map $u:C^*(X)\rightarrow C^*(Y)$ is normed, weakly additive,
preserves $\max$ and weakly preserves $\min$ (resp., preserves
$\min$ and weakly preserves $\max$) provided $u$ satisfies the
corresponding equalities above with the constants $c$ replaced by
the constant functions $c_Y$ on $Y$. Obviously, $u$ has each of
these properties if and only if all functionals $\mu_y$, $y\in Y$,
have the same property.

The set of all normed, weakly additive functionals on $C^*(X)$ which
preserve $\max$ (resp. $\min$) and weakly preserve $\min$ (resp.,
$\max$) is denoted by $\mathfrak{R}_{max}(X)$ (resp.,
$\mathfrak{R}_{min}(X)$). The topology of these two spaces is
inherited from the product $\mathbb R^{C^*(X)}$. We describe the
supports of  the functionals from
$\mathfrak{R}_{max}(X)\cup\mathfrak{R}_{min}(X)$ and introduce the
subspaces $\mathfrak{R}_{max}(X)_{c}\subset\mathfrak{R}_{max}(X)$
and $\mathfrak{R}_{min}(X)_c\subset\mathfrak{R}_{min}(X)$ consisting
of functionals with compact supports. As a result of this
description, we obtain a characterization of the functionals from
$\mathfrak{R}_{min}(X)_c$ and $\mathfrak{R}_{min}(X)$ (Theorem
\ref{functional}): $\mu\in\mathfrak{R}_{min}(X)_c$ (resp.,
$\mu\in\mathfrak{R}_{min}(X)$) if and only if there exists a
non-empty compact subset $F\subset X$ (resp., $F\subset\beta X$)
such that $\mu(f)=\inf\{f(x):x\in F\}$ (resp., $\mu(f)=\inf\{\beta
f(x):x\in F\}$). A similar characterization holds for the
functionals from $\mathfrak{R}_{max}(X)_c$ and
$\mathfrak{R}_{max}(X)$. Actually, there exists a homeomorphism
$\nu_X\colon \mathfrak{R}_{max}(X)\to\mathfrak{R}_{min}(X)$ such
that
$\nu_X\big(\mathfrak{R}_{max}(X)_c\big)=\mathfrak{R}_{min}(X)_c$.
For any $\mu\in\mathfrak{R}_{max}(X)$ the functional
$\nu_X(\mu)\in\mathfrak{R}_{min}(X)$ is defined by
$\nu_X(\mu)(f)=-\mu(-f)$, $f\in C^*(X)$.

We also establish that for any Tychonoff space $X$ each of the
spaces $\mathfrak{R}_{max}(X)_c$ and $\mathfrak{R}_{min}(X)_c$ is
homeomorphic to the hyperspace $\exp_cX$ of the non-empty compact
subsets of $X$ (see Theorem \ref{exp}) with the Vietoris topology.
Proposition \ref{exp{lsc}} shows that similar results hold for
$\exp_cX$ equipped with the upper or the lower Vietoris topology.
When $X$ is compact, $\mathfrak{R}_{max}(X)_c=\mathfrak{R}_{max}(X)$
and $\mathfrak{R}_{min}(X)_c=\mathfrak{R}_{min}(X)$, so we have a
characterization of the hyperspace $\exp X$ which  was earlier
established by Radul \cite{r0}).

We also prove (see Theorem \ref{retract}) that if $X\subset Y$, then
there exists a continuous compact-valued retraction from $Y$ onto
$X$ iff there exists a normed weakly additive extender $u\colon
C(^*(X)\to C^*(Y)$ with compact supports preserving $\min$ (resp.,
$\max$) and weakly preserving $\max$ (resp., $\min$). Based on
Theorem \ref{retract}, we show (Theorem \ref{ae}) that for any
Tychonoff space $X$ the following conditions are equivalent to $X\in
AE(1)$: (i) For any $C$-embedding of $X$ into a space $Y$ there
exists an extender $u:C^*(X)\rightarrow C^*(Y)$ with compact
supports such that $u$ is normed, weakly additive, preserves $\min$
and weakly preserves $\max$; (ii) For any $C$-embedding of $X$ into
a space $Y$ there exists an extender $u:C^*(X)\rightarrow C^*(Y)$
with compact supports such that $u$ is normed, weakly additive,
preserves $\max$ and weakly preserves $\min$; (iii) For any
$C$-embedding of $X$ into a  space $Y$ there exists a map $\theta
:Y\rightarrow\mathfrak{R}_{min}(X)_c$ such that $\theta(x)=\delta_x$
for all $x$ in $X$; (iv) For any $C$-embedding of $X$ into a  space
$Y$ there exists a map $\theta :Y\rightarrow\mathfrak{R}_{max}(X)_c$
such that $\theta(x)=\delta_x$ for all $x$ in $X$.

In the Section 4 we establish an analogue of Theorem \ref{retract}
concerning upper (resp., lower) semi-continuous compact-valued
retracts. For example, Theorem \ref{usretract} states that the
existence of an upper semi-continuous compact-valued retraction
$r\colon Y\to X$ is equivalent to each of the following conditions:
(i) There exists a normed weakly additive extender $u\colon
C^*(X)\to C^*_{lsc}(Y)$ with compact supports preserving $\min$ and
weakly preserving $\max$; (ii) There exists a normed weakly additive
extender $u\colon C^*(X)\to C^*_{usc}(Y)$ with compact supports
preserving $\max$ and weakly preserving $\min$. Here, $C^*_{lsc}(Y)$
(resp., $C^*_{usc}(Y)$) denotes all bounded lower (resp., upper)
semi-continuous real-valued functions on $Y$. Theorem
\ref{usretract} implies another characterization of $AE(0)$-spaces
in terms of non-linear extenders. In the last section we introduce
the class of {\em Zarichnyi spaces} and raise some questions.

Finally, let us mention that the results in Section 2 are taken from
the first author's MSc thesis \cite{al} which was written under the
supervision of the second author.

\section{Functionals from $\mathfrak{R}_{max}(X)$ and $\mathfrak{R}_{min}(X)$ and their supports}

Let $\mathfrak{R}_{max}(X)$ (resp., $\mathfrak{R}_{min}(X)$) be the
space of all normed, weakly additive functionals on $C^*(X)$ which
preserve $\max$ and weakly preserve $\min$ (resp., preserve $\min$
and weakly preserve $\max$).

In this section we describe the supports of the functionals from
sets $\mathfrak{R}_{max}(X)$ and $\mathfrak{R}_{min}(X)$. For any
functional $\mu\colon C^*(X)\to\mathbb R$ we define its support
$S(\mu)$ to be the following subset of the \v{C}ech-Stone
compactification $\beta X$ of $X$ (see \cite{v2} for a similar
definition):

\begin{definition}
$S(\mu)$ is the set of all $x\in\beta X$ such that for every
neighborhood $O_x$ of $x$ in $\beta X$ there exist $f,g\in C^*(X)$
with $\beta f|(\beta X\backslash O_x)= \beta g|(\beta X\backslash
O_x)$ and $\mu(f)\neq\mu(g)$.
\end{definition}

Here, $\beta f\colon\beta X\to\mathbb R$ is the \v{C}ech-Stone
extension of $f$ and $\beta f|(\beta X\backslash O_x)$ denotes its
restriction on the set $\beta X\backslash O_x$. Obviously, $S(\mu)$
is a closed subset of $\beta X$ (possibly empty). If
$\varnothing\neq S(\mu)\subset X$, we say that $\mu$ has a compact
support. Identifying $C^*(X)$ with $C(\beta X)$, any functional
$\mu$ on $C^*(X)$ can be considered as a function $\mu\colon C(\beta
X)\to\mathbb R$.

For any $\mu$ let $\mathcal A_\mu$ be the family of all closed
non-empty sets $A\subset\beta X$ such that for any $f,g\in C(\beta
X)$ we have $\mu(f)=\mu(g)$ provided $f|A=g|A$. It is clear that
$\beta X\in\mathcal A_\mu$, so $\mathcal A_\mu\neq\varnothing$.

A functional $\mu$ on $C(\beta X$ is called {\em monotone} if $f\leq
g$ implies $\mu(f)\leq\mu(g)$. Obviously, every functional
preserving $\max$ or $\min$ is monotone.

\begin{lemma}
Suppose that $\mu\colon C(\beta X)\to\mathbb R$ is a normed
functional with $\mu(0_X)=0$. Then $\mathcal A_\mu$ is closed with
respect to finite intersections and $S(\mu)=\bigcap\{A:A\in\mathcal
A_\mu\}$. Moreover, $S(\mu)\in\mathcal A_\mu$ provided $\mu$ is
weakly additive and monotone.
\end{lemma}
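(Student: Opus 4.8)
The plan is to dispatch the three assertions in turn, with the recurring observation that $\mu$ is \emph{not} a constant functional: indeed $\mu(1_X)=1\neq 0=\mu(0_X)$, so there is no $c\in\mathbb R$ with $\mu\equiv c$. Throughout I identify $C^*(X)$ with $C(\beta X)$ and use that $\beta X$ is a compact (hence normal) space.

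\emph{$\mathcal A_\mu$ is closed under finite intersections.} It suffices to treat two sets $A,B\in\mathcal A_\mu$. First I would check $A\cap B\neq\varnothing$: otherwise $A$ and $B$ are disjoint closed subsets of $\beta X$, so for arbitrary $f,g\in C(\beta X)$ the Tietze theorem yields $h\in C(\beta X)$ with $h|A=f|A$ and $h|B=g|B$; then $A\in\mathcal A_\mu$ gives $\mu(f)=\mu(h)$, $B\in\mathcal A_\mu$ gives $\mu(h)=\mu(g)$, and $\mu$ would be constant, a contradiction. Now take $f,g\in C(\beta X)$ with $f|(A\cap B)=g|(A\cap B)$. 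The function on the closed set $A\cup B$ that equals $f$ on $A$ and $g$ on $B$ is well defined, continuous and bounded, so Tietze extends it to some $h\in C(\beta X)$; as above $\mu(f)=\mu(h)=\mu(g)$, whence $A\cap B\in\mathcal A_\mu$.

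\emph{$S(\mu)=\bigcap\mathcal A_\mu$.} This is a direct unwinding of the two definitions. If $x\notin A$ for some $A\in\mathcal A_\mu$, then $O_x=\beta X\setminus A$ is a neighborhood of $x$, and every pair $f,g$ with $\beta f|(\beta X\setminus O_x)=\beta g|(\beta X\setminus O_x)$, i.e. $f|A=g|A$, satisfies $\mu(f)=\mu(g)$; hence $x\notin S(\mu)$, proving $S(\mu)\subset\bigcap\mathcal A_\mu$. Conversely, if $x\notin S(\mu)$, pick the witnessing neighborhood $O_x$; then $C=\beta X\setminus O_x$ belongs to $\mathcal A_\mu$ — it is nonempty because $\mu$ is not constant — and $x\notin C$, so $x\notin\bigcap\mathcal A_\mu$. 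Since $\mathcal A_\mu$ is now known to be a family of nonempty closed subsets of the compactum $\beta X$ that is stable under finite intersections, it has the finite intersection property, and therefore $S(\mu)=\bigcap\mathcal A_\mu\neq\varnothing$.

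\emph{$S(\mu)\in\mathcal A_\mu$ when $\mu$ is weakly additive and monotone.} Let $f,g\in C(\beta X)$ with $f|S(\mu)=g|S(\mu)$ and fix $\varepsilon>0$. The set $U=\{x\in\beta X:|f(x)-g(x)|<\varepsilon\}$ is open and contains $\bigcap\mathcal A_\mu$, so by compactness of $\beta X$ the family $\{A\cap(\beta X\setminus U):A\in\mathcal A_\mu\}$ of closed sets with empty intersection has a finite subfamily with empty intersection; using stability of $\mathcal A_\mu$ under finite intersections, this produces a single $A\in\mathcal A_\mu$ with $A\subset U$. Put $g^{*}=\min\{\max\{g,f-\varepsilon 1_X\},f+\varepsilon 1_X\}\in C(\beta X)$. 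On $A$ we have $|f-g|<\varepsilon$, so $g^{*}|A=g|A$ and hence $\mu(g^{*})=\mu(g)$; everywhere on $\beta X$ we have $f-\varepsilon 1_X\leq g^{*}\leq f+\varepsilon 1_X$, so monotonicity together with weak additivity gives $\mu(f)-\varepsilon\leq\mu(g^{*})\leq\mu(f)+\varepsilon$. Therefore $|\mu(f)-\mu(g)|\leq\varepsilon$, and letting $\varepsilon\to 0$ yields $\mu(f)=\mu(g)$. The only genuinely non-formal point is this last compactness step, replacing the whole family $\mathcal A_\mu$ by one member contained in $U$; once that is available, the clamped function $g^{*}$ reduces the matter to the defining equalities for $\mu$. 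A minor but essential subtlety earlier is that one must establish $A\cap B\neq\varnothing$ before it can be claimed to lie in $\mathcal A_\mu$.
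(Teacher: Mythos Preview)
Your proof is correct and follows the same three-part architecture as the paper's: first show $A\cap B\neq\varnothing$ and then $A\cap B\in\mathcal A_\mu$ via Tietze extension, then unwind the definitions to get $S(\mu)=\bigcap\mathcal A_\mu$, and finally use compactness to trap $S(\mu)$ inside a single $A\in\mathcal A_\mu$ contained in the $\varepsilon$-agreement set. The one substantive difference is in the last step. The paper produces the auxiliary function $h$ with $h|A=f|A$ and $g-\varepsilon\leq h\leq g+\varepsilon$ by invoking Michael's convex-valued selection theorem, whereas you write down the explicit clamp $g^{*}=\min\{\max\{g,f-\varepsilon 1_X\},f+\varepsilon 1_X\}$, observe that $g^{*}|A=g|A$ and $f-\varepsilon\leq g^{*}\leq f+\varepsilon$, and finish with monotonicity and weak additivity. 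Your construction is more elementary and self-contained; the paper's route is softer but imports a nontrivial selection theorem for a task that, as you show, needs only lattice operations. Your additional remarks---that $\mu$ not being constant forces $\beta X\setminus O_x\neq\varnothing$ and that $S(\mu)\neq\varnothing$ by the finite intersection property---are also correct refinements of points the paper handles more casually.
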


\begin{proof}
Suppose $A, B\in\mathcal A_\mu$ with $A\cap B=\varnothing$. There
exists $f\in C(\beta X)$ such that $f(A)=1$ and $f(B)=0$. So,
$f|A=1_{\beta X}|A$ and $f|B=0_{\beta X}|B$. This implies
$\mu(f)=\mu (1_{\beta X})=1$ and $\mu(f)=\mu(0_{\beta X})=0$, a
contradiction. Therefore, $A\cap B\neq\varnothing$ for any two
elements of $\mathcal A_\mu$, and it is easily seen that $A\cap
B\in\mathcal A_\mu$. Then, by induction,
$\bigcap_{i=1}^{i=k}A_i\in\mathcal A_\mu$ if $A_1,..,A_k\in\mathcal
A_\mu$.

For the equality $S(\mu)=\bigcap\{A:A\in\mathcal A_\mu\}$, suppose
$x\not\in S(\mu)$. Then, there exists a neighborhood
$O_x\subset\beta X$ of $x$ such that $\mu(f)=\mu(g)$ for every
$f,g\in C(\beta X)$ with $f|(\beta X\backslash O_x)= g|(\beta
X\backslash O_x)$ (we can assume that $\beta X\backslash
O_x\neq\varnothing$ by choosing a smaller $O_x$). Consequently,
$\beta X\backslash O_x\in\mathcal A_\mu$ and $x\not\in
A_\mu=\bigcap\{A:A\in\mathcal A_\mu\}$. If $x\not\in A_\mu$, there
exists $A\in\mathcal A_\mu$ with $x\not\in A$. Then $O_x=\beta
X\backslash A$ is a neighborhood of $x$ such that $\mu(f)=\mu(g)$
for all $f,g\in C(\beta X)$ with $f|(\beta X\backslash O_x)=
g|(\beta X\backslash O_x)$. Hence, $x\not\in S(\mu)$.

Finally, suppose $\mu$ is weakly additive, and let
$f|S(\mu)=g|S(\mu)$ for some $f,g\in C(\beta X)$. Then, for every
$\epsilon>0$ the set $U_\epsilon=\{x\in\beta
X:|f(x)-g(x)|<\epsilon\}$ is a neighborhood of $S(\mu)$. So, we can
find finitely many $B_1,..,B_j\in\mathcal A_\mu$ such that
$S(\mu)\subset B_0=\bigcap_{i=1}^{i=j}B_i\subset U_\epsilon$. Next,
there exists a function $h\in C(\beta X)$ with $h|B_0=f|B_0$ and
$g(x)-\epsilon\leq h(x)\leq g(x)+\epsilon$ for all $x\in\beta X$.
Indeed, consider the lower semi-continuous convex-valued map
$\Phi\colon\beta X\to\mathbb R$, defined by $\Phi (x)=f(x)$ for
$x\in B_0$ and $\Phi(x)$ to be the interval $[g(x)-\epsilon,
g(x)+\epsilon]$ for $x\not\in B_0$. According to Michael's selection
theorem \cite{m}, $\Phi$ admits a continuous selection $h\in C(\beta
X)$. Since $B_0\in\mathcal A_\mu$, $\mu(f)=\mu(h)$. On the other
hand, the inequalities $g-\epsilon\leq h\leq g+\epsilon$ imply
$\mu(g)-\epsilon\leq\mu(h)\leq\mu(g)+\epsilon$ (recall that $\mu$ is
weakly additive and monotone). Hence, $|\mu(f)-\mu(g)|<\epsilon$ for
every $\epsilon>0$ which yields $\mu(f)=\mu(g)$.
\end{proof}

\begin{cor}
$S(\mu)\in\mathcal A_\mu$ for any normed and weakly additive
monotone functional $\mu$.
\end{cor}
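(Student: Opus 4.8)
The plan is to deduce this Corollary directly from the ``Moreover'' part of the preceding Lemma. The two statements differ only in that the Lemma carries the extra hypothesis $\mu(0_X)=0$, so the single point to check is that this normalization is automatic for a normed, weakly additive functional.

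First I would verify $\mu(0_X)=0$. Applying weak additivity to the function $0_X$ with the constant $c=1$ gives $\mu(0_X+1_X)=\mu(0_X)+1$, that is, $\mu(1_X)=\mu(0_X)+1$. Since $\mu$ is normed, $\mu(1_X)=1$, and therefore $\mu(0_X)=0$.

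With this established, $\mu$ meets every hypothesis of the Lemma: it is normed, it satisfies $\mu(0_X)=0$, it is weakly additive, and it is monotone by assumption. The Lemma's final clause then yields $S(\mu)\in\mathcal A_\mu$, which is exactly the assertion of the Corollary. Monotonicity is genuinely used inside the Lemma's argument, where it is combined with weak additivity to squeeze $\mu(h)$ between $\mu(g)-\epsilon$ and $\mu(g)+\epsilon$; recall that any functional preserving $\max$ or $\min$ is monotone, so in particular the Corollary applies to every functional in $\mathfrak{R}_{max}(X)\cup\mathfrak{R}_{min}(X)$, which is presumably the intended use.

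I do not anticipate any real obstacle here: all the substance is contained in the Lemma, and the Corollary is merely the observation that the hypothesis $\mu(0_X)=0$ is redundant once one knows $\mu$ is normed and weakly additive.
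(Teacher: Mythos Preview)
Your proposal is correct and matches the paper's own proof essentially verbatim: the paper also derives $\mu(0_X)=0$ from $1=\mu(0_X+1_X)=\mu(0_X)+1$ and then invokes Lemma~2.2. Your additional remarks about the role of monotonicity are accurate but go beyond what the paper records.
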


\begin{proof}
This follows from Lemma 2.2 because $1=\mu(0_X+1_X)=\mu(0_X)+1$
implies $\mu(0_X)=0$.
\end{proof}

For any functional $\mu$ on $C(\beta X)$ we denote by $\Lambda_\mu$
the family of all closed subsets $A\subset\beta X$ satisfying the
following condition: if $B\subset\beta X$ is a closed disjoint set
from $A$, then there exists $g\in C(\beta X)$ such that $\mu(g)=0$,
$g(A)\subset (-\infty,0]$ and $g(B)\subset (0,\infty)$. Without loss
of generality, we may assume that $\beta X\in\Lambda_\mu$.

\begin{lemma}
Let $\mu$ be a normed, monotone, weakly additive functional weakly
preserving $\max$ and $\min$. Then,
$\mu(f)=\inf_{A\in\Lambda_\mu}\sup_{x\in A}f(x)$ for any $f\in
C(\beta X)$.
\end{lemma}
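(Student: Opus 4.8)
The plan is to establish the formula $\mu(f) = \inf_{A \in \Lambda_\mu} \sup_{x \in A} f(x)$ by proving two inequalities. I would first record the elementary but crucial observation that $S(\mu) \in \Lambda_\mu$ (or at least that $\Lambda_\mu$ is nonempty and cofinal below $S(\mu)$-neighborhoods), and that membership in $\Lambda_\mu$ is a genuine separation-type condition allowing us to build test functions with prescribed sign pattern and $\mu$-value zero. The two properties "weakly preserves $\max$" and "weakly preserves $\min$" give, for any constant $c$, that $\mu(\max\{g, c\}) = \max\{\mu(g), c\}$ and $\mu(\min\{g, c\}) = \min\{\mu(g), c\}$; combined with weak additivity and monotonicity these are the only analytic tools available, so the whole proof must be a clever bookkeeping of truncations.

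**For the inequality $\mu(f) \le \inf_{A \in \Lambda_\mu} \sup_{x \in A} f(x)$,** I would fix $A \in \Lambda_\mu$, set $s = \sup_{x \in A} f(x)$, and aim to show $\mu(f) \le s$. Consider $f - s$, which is $\le 0$ on $A$. I want to produce a function $g$ with $\mu(g) = 0$ dominating $f - s$ (or dominating it after truncation), so that monotonicity gives $\mu(f - s) \le \mu(g) = 0$, hence $\mu(f) \le s$ by weak additivity. The set where $f - s > 0$ is an open set whose closure may meet $A$ only in points where $f = s$; to get a clean disjoint closed set I would first truncate: for $\epsilon > 0$ let $B_\epsilon = \{x : f(x) - s \ge \epsilon\}$, which is closed and disjoint from $A$, invoke the defining property of $\Lambda_\mu$ to get $g_\epsilon$ with $\mu(g_\epsilon) = 0$, $g_\epsilon \le 0$ on $A$ and $g_\epsilon > 0$ on $B_\epsilon$. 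The issue is that $g_\epsilon$ only controls the sign, not the size, of $f - s$; I would fix this by passing to $\max\{f - s - \epsilon, \text{(something built from } g_\epsilon)\}$ and using that $\max$ with a large positive multiple of $g_\epsilon$ still has small $\mu$-value via the weak-$\max$ and monotonicity inequalities — essentially showing $\mu(\max\{f-s-\epsilon, N g_\epsilon\}) \to \mu(f - s - \epsilon)$ is not quite it; more carefully, on $B_\epsilon$ we may dominate $f - s$ by a large multiple of $g_\epsilon$, off $B_\epsilon$ we have $f - s < \epsilon$, so $f - s \le \max\{\epsilon \cdot 1_{\beta X}, N g_\epsilon\}$ pointwise for suitable $N$, and then monotonicity plus $\mu(\max\{\epsilon 1, N g_\epsilon\}) \le \epsilon$ (which needs weak preservation of $\max$ together with $\mu(N g_\epsilon) \le 0$, itself from monotonicity since $\mu$ is monotone and $\mu(g_\epsilon)=0$ forces... actually $N g_\epsilon$ need not have $\mu$-value $\le 0$) — this is the delicate point and I expect the actual argument instead clamps $g_\epsilon$ into a bounded range using the $\Lambda_\mu$ flexibility and truncations so that the relevant inequalities close up. Letting $\epsilon \to 0$ finishes this direction.

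**For the reverse inequality $\mu(f) \ge \inf_{A \in \Lambda_\mu} \sup_{x \in A} f(x)$,** I would produce, for each $\epsilon > 0$, a set $A \in \Lambda_\mu$ with $\sup_{x \in A} f(x) \le \mu(f) + \epsilon$. The natural candidate is $A = \{x \in \beta X : f(x) \le \mu(f) + \epsilon\}$ — but one must check this closed set actually lies in $\Lambda_\mu$. Here I would use $S(\mu) \in \mathcal A_\mu$ from the Corollary: if $B$ is closed and disjoint from $A$, then $f > \mu(f) + \epsilon$ on $B$, and I need a $g$ with $\mu(g) = 0$, $g \le 0$ on $A$, $g > 0$ on $B$. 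Building $g$ from $f$ itself is tempting — e.g. $g = \min\{f - \mu(f) - \epsilon/2, \text{truncation}\}$ has the right sign pattern, and its $\mu$-value can be computed: $\mu(\min\{f - \mu(f) - \epsilon/2, c\}) = \min\{\mu(f) - \mu(f) - \epsilon/2, c\} = \min\{-\epsilon/2, c\}$ by weak additivity and weak preservation of $\min$, which is negative, not zero. So I would instead shift: the right $g$ is something like $\max\{\min\{f - \mu(f), \epsilon\}, -\delta\}$ arranged so that weak additivity and the weak-$\min$/weak-$\max$ equalities evaluate $\mu(g)$ to exactly $0$ — concretely, $\mu(\max\{\min\{f - \mu(f), a\}, b\}) = \max\{\min\{\mu(f) - \mu(f), a\}, b\} = \max\{\min\{0, a\}, b\} = \max\{0, b\}$ if $a > 0$, so choosing $b = 0$ we get $\mu$-value $0$, while $g = \max\{\min\{f - \mu(f), a\}, 0\}$ is $0$ exactly where $f \le \mu(f)$ and positive where $f > \mu(f)$; taking $a$ small and the threshold $\mu(f) + \epsilon$ defining $A$ gives $g \le 0$ on $A$ only if $A \subseteq \{f \le \mu(f)\}$, which fails — so one takes $A = \{f \le \mu(f)\}$ directly (assuming it is nonempty, which follows since otherwise $f > \mu(f)$ everywhere contradicts monotonicity/normed-ness after translating), giving $\sup_A f \le \mu(f) \le \mu(f) + \epsilon$, and $g = \max\{\min\{f - \mu(f), a\}, 0\}$ works as the separating function with $\mu(g) = 0$. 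The two cases $A = \varnothing$ (when $f < $ all values, handled by constants) and $A \ne \varnothing$ must both be dispatched.

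**The main obstacle** I anticipate is the first inequality: converting the purely qualitative sign information "$g_\epsilon \le 0$ on $A$, $g_\epsilon > 0$ on $B_\epsilon$, $\mu(g_\epsilon) = 0$" into the quantitative bound $\mu(f) \le \sup_A f$. The subtlety is that $\mu$ is not linear, so one cannot simply scale $g_\epsilon$; one must repeatedly truncate and take $\max$/$\min$ with constants, exploiting that these operations interact well with $\mu$ via properties (ii)–(iv), and carefully track that the truncated function still dominates $f$ minus a sup-plus-$\epsilon$ on the nose. The reverse inequality is comparatively routine once one hits on the correct explicit separating function of the form $\max\{\min\{f - \mu(f), a\}, 0\}$ and checks, via weak additivity and weak preservation of both $\max$ and $\min$, that its $\mu$-value is $0$.
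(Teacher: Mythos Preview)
Your approach to the reverse inequality is essentially the paper's: after reducing to $\mu(f)=0$ via weak additivity, the set $A_0=\{x:f(x)\le 0\}$ works, and the separating function can simply be $g=f$ itself (your construction $\max\{\min\{f-\mu(f),a\},0\}$ is correct but unnecessarily elaborate).

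The genuine gap is in the other direction. You correctly identify that scaling $g_\epsilon$ fails because $\mu$ is not homogeneous, and you never actually close this hole---``I expect the actual argument clamps $g_\epsilon$'' is not a proof. The paper's resolution is different and avoids domination entirely. Argue by contradiction: suppose $\sup_{x\in H}f(x)<0$ for some $H\in\Lambda_\mu$ (still with $\mu(f)=0$). Let $B=f^{-1}([0,\infty))$, which is closed and disjoint from $H$, and invoke the $\Lambda_\mu$ condition to get $g$ with $\mu(g)=0$, $g\le 0$ on $H$, $g>0$ on $B$. The key comparison is not between $f$ and a scaled $g$, but between the two \emph{truncations}
\[
\min\{0_{\beta X},f\}\quad\text{and}\quad\max\{0_{\beta X},g\}.
\]
A short case check shows strict pointwise inequality: where $f<0$ the left side is negative while the right is $\ge 0$; where $f\ge 0$ we are in $B$, so the left side is $0$ and the right side is $g>0$. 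Compactness of $\beta X$ gives a uniform gap $c>0$, so $c+\min\{0,f\}\le\max\{0,g\}$. Now apply $\mu$ using weak additivity, weak preservation of $\min$, monotonicity, and weak preservation of $\max$:
\[
c+\min\{0,\mu(f)\}\le\max\{0,\mu(g)\},\qquad\text{i.e.}\qquad c\le 0,
\]
a contradiction. The point you were missing is that truncating \emph{both} functions at $0$ makes the $\mu$-values computable (both equal $0$) without any scaling, and the strict pointwise inequality plus compactness supplies the positive gap.
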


\begin{proof}
We follow the proof of Theorem 4.2 from \cite{r1}. Since $\mu$ is
weakly additive, considering the function $f-\mu(f)$ if necessary,
we may assume that $\mu(f)=0$. Then
$A_0=f^{-1}((-\infty,0])\in\Lambda_\mu$ and $\sup_{x\in A_0}f(x)\leq
0$. Suppose there exists $H\in\Lambda_\mu$ such that $\sup_{x\in
H}f(x)<0$, and let $B=f^{-1}([0,\infty))$. According to the
definition of $\Lambda_\mu$, there exists $g\in C(\beta X)$ such
that $\mu(g)=0$, $g(x)\leq 0$ for every $x\in H$ and $g(x)>0$ for
all $x\in B$. Hence, $\min\{0_{\beta X}, f\}(x)<\max\{0_{\beta
X},g\}(x)$ for all $x\in\beta X$. Consequently, $c_{\beta
X}+\min\{0_{\beta X}, f\}\leq\max\{0_{\beta X},g\}$ for some $c>0$.
So, $$\mu(c_{\beta X}+\min\{0_{\beta
X},f\})=c+\min\{0,\mu(f)\}\leq\max\{0,\mu(g)\}.$$ Since
$\min\{0,\mu(f)\}=\max\{0,\mu(g)\}=0$, this a contradiction.
Therefore, $\sup_{x\in H}f(x)\geq 0$ for every $H\in\Lambda_\mu$.
The last inequality together with $\sup_{x\in A_0}f(x)\leq 0$ yields
$\inf_{A\in\Lambda_\mu}\sup_{x\in A}f(x)=0=\mu(f)$.
\end{proof}

If $A\subset\beta X$ is a closed set and $O_A$ its neighborhood in
$\beta X$, let $C(O_A)$ be the set of all functions $f\colon\beta
X\to [-1,0]$ with the following property: there exists an open set
$U\subset\beta X$ such that $A\subset U\subset\overline{U}\subset
O_A$, $f(\overline{U})=-1$ and $f(x)=0$ for all $x\not\in O_A$.

\begin{lemma}
Let $\mu$ be a normed, monotone, weakly additive functional weakly
preserving $\max$ and $\min$. Then $A\in\Lambda_\mu$ if and only if
$\mu(f)\neq 0$ for all $f\in C(O_A)$ and all $O_A$.
\end{lemma}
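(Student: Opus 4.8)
The plan is to prove the two implications directly, the main tools being the normality and compactness of $\beta X$, Urysohn's lemma, and the algebraic properties of $\mu$, following the pattern of the proof of the previous lemma (and of Theorem~4.2 in \cite{r1}). Throughout I use $\mu(0_{\beta X})=0$ and $\mu(-1_{\beta X})=-1$, which follow from normedness and weak additivity as in Corollary~2.3. Note that of the hypotheses on $\mu$ only normedness, weak additivity, monotonicity, and weak preservation of $\max$ actually enter the argument.

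For the ``if'' direction I would assume $\mu(f)\neq 0$ for every neighbourhood $O_A$ of $A$ and every $f\in C(O_A)$, and take a closed set $B\subset\beta X$ with $A\cap B=\varnothing$; the case $B=\varnothing$ is trivial since $g=0_{\beta X}$ works, so assume $B\neq\varnothing$. Using normality, pick an open $O_A\supset A$ with $\overline{O_A}\cap B=\varnothing$ and then an open $U$ with $A\subset U\subset\overline U\subset O_A$; by Urysohn's lemma choose $f\in C(O_A)$ with $f(\overline U)=-1$ and $f|(\beta X\setminus O_A)=0$. The witness for $A\in\Lambda_\mu$ is $g=f-\mu(f)$: weak additivity gives $\mu(g)=0$, and from $-1_{\beta X}\leq f\leq 0_{\beta X}$ and monotonicity one gets $-1\leq\mu(f)\leq 0$, while the hypothesis excludes $\mu(f)=0$; hence $-1\leq\mu(f)<0$, so $g=-1-\mu(f)\leq 0$ on $A\subset\overline U$ and $g=-\mu(f)>0$ on $B\subset\beta X\setminus O_A$.

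For the ``only if'' direction I would fix $A\in\Lambda_\mu$, a neighbourhood $O_A$, and $f\in C(O_A)$ with associated open set $U$ (so $A\subset U\subset\overline U\subset O_A$, $f(\overline U)=-1$, $f=0$ off $O_A$). If $\overline U=\beta X$ then $f\equiv-1_{\beta X}$ and $\mu(f)=-1\neq 0$. Otherwise $B:=\beta X\setminus U$ is a non-empty closed set disjoint from $A$, so the definition of $\Lambda_\mu$ supplies $g\in C(\beta X)$ with $\mu(g)=0$, $g\leq 0$ on $A$, and $g>0$ on $B$. The key step is the strict pointwise inequality $f<\max\{0_{\beta X},g\}$ on all of $\beta X$: on $\overline U$ it holds since $f=-1<0\leq\max\{0,g\}$, and on $\beta X\setminus\overline U\subset\beta X\setminus U=B$ it holds since there $\max\{0,g\}=g>0\geq f$. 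By compactness of $\beta X$ there is $c>0$ with $f+c\leq\max\{0_{\beta X},g\}$, and applying $\mu$ (monotone), together with weak additivity and weak preservation of $\max$,
\[
\mu(f)+c=\mu(f+c)\leq\mu\big(\max\{0_{\beta X},g\}\big)=\max\{\mu(g),0\}=0,
\]
so $\mu(f)\leq-c<0$, in particular $\mu(f)\neq 0$.

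I expect the only genuine subtlety to be in the ``only if'' direction: one must take $B=\beta X\setminus U$ rather than $\beta X\setminus O_A$, so that $g$ is strictly positive precisely on the region complementary to $\overline U$ (where $f=-1$), which is what makes the pointwise inequality $f<\max\{0_{\beta X},g\}$, and hence the compactness argument, go through. Everything else is a routine application of normality, Urysohn's lemma, and the defining properties of $\mu$.
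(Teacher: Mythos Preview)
Your proof is correct. The ``if'' direction matches the paper's argument almost exactly (the paper simply takes $O_A=\beta X\setminus B$ rather than a smaller neighbourhood, but the construction of the witness $h=g-\mu(g)$ is identical).

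The ``only if'' direction differs: the paper dispatches it in one line by invoking Lemma~2.4, since $A_0\in\Lambda_\mu$ and $f(A_0)=-1$ give $\mu(f)=\inf_{A\in\Lambda_\mu}\sup_{x\in A}f(x)\leq\sup_{x\in A_0}f(x)=-1$. You instead work directly from the definition of $\Lambda_\mu$, producing the separating function $g$ and running the compactness argument $f<\max\{0_{\beta X},g\}$ yourself. This is essentially an inline reproof of the relevant half of Lemma~2.4, slightly streamlined because here $f\leq 0$ is automatic (so $\min\{0_{\beta X},f\}=f$ and weak preservation of $\min$ never enters). Your approach is thus self-contained and, as you noted, uses one hypothesis fewer; the paper's is shorter because the work was already packaged in the previous lemma. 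Your remark about needing $B=\beta X\setminus U$ rather than $\beta X\setminus O_A$ is a genuine point for your direct argument---the strict inequality would fail on $O_A\setminus\overline U$ otherwise---but it is moot for the paper's route via Lemma~2.4.
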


\begin{proof}
Suppose $A_0\in\Lambda_\mu$ and $f\in C(O_{A_0})$ for some
$O_{A_0}$. Since $\mu(f)=\inf_{A\in\Lambda_\mu}\sup_{x\in A}f(x)$
(see Lemma 2.4) and $f(A_0)=-1$, $\mu(f)\neq 0$.

Now, suppose $A\subset\beta X$ is closed such that $\mu(f)\neq 0$
for all $f\in C(O_A)$ and all $O_A$. To show that $A\in\Lambda_\mu$,
take $B\subset\beta X$ to be a closed set disjoint with $A$. Let
$O_A=X\backslash B$ and $g\in C(O_A)$. Then $g(A)=-1$, $g(B)=0$ and
$\mu(g)\neq 0$. Since $-1_{\beta X}\leq g\leq 0_{\beta X}$, we have
$-1\leq\mu(g)<0$. Hence, $h(x)=-\mu(g)>0$ for all $x\in B$ and
$h(x)=g(x)-\mu(g)=-1-\mu(g)\leq 0$ for all $x\in A$, where
$h=g-\mu(g)$. Therefore, $A\in\Lambda_\mu$.
\end{proof}

\begin{lemma}
Let $\mu$ be a normed, monotone, weakly additive functional weakly
preserving $\max$ and $\min$. Then $s(\mu)=\{x\in\beta
X:\{x\}\in\Lambda_\mu\}$ is a closed subset of $S(\mu)$. Moreover,
$A\cap s(\mu)\neq\varnothing$ for all $A\in\Lambda_\mu$ if $\mu$
preserves $\min$.
\end{lemma}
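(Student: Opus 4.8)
The plan is to reduce everything to Lemma 2.5, which says that a closed set $A\subset\beta X$ belongs to $\Lambda_\mu$ if and only if $\mu(f)\neq 0$ for every neighborhood $O_A$ of $A$ and every $f\in C(O_A)$. Throughout I use $\mu(0_{\beta X})=0$, which follows from normedness and weak additivity (cf.\ the proof of Corollary 2.3).

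To see that $s(\mu)$ is closed, take $x\notin s(\mu)$, i.e.\ $\{x\}\notin\Lambda_\mu$. By Lemma 2.5 there are a neighborhood $O_x$ and $f\in C(O_x)$ with $\mu(f)=0$; let $U$ be the open set accompanying $f$, so $x\in U\subset\overline U\subset O_x$, $f|\overline U\equiv-1$ and $f|(\beta X\setminus O_x)\equiv0$. For every $y\in U$ this same $f$ witnesses $\{y\}\notin\Lambda_\mu$ (take $O_y:=O_x$; the open set $U$ still contains $y$, has $\overline U\subset O_x$, $f|\overline U\equiv-1$), so $U\cap s(\mu)=\varnothing$ and $\beta X\setminus s(\mu)$ is open. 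For the inclusion $s(\mu)\subset S(\mu)$, fix $x\in s(\mu)$ and a neighborhood $O_x$ of $x$. If $O_x=\beta X$, then $f=1_{\beta X}$ and $g=0_{\beta X}$ witness the defining property of $S(\mu)$ by normedness. If $O_x\neq\beta X$, normality of $\beta X$ and Urysohn's lemma yield some $f\in C(O_x)$; since $\{x\}\in\Lambda_\mu$, Lemma 2.5 gives $\mu(f)\neq0=\mu(0_{\beta X})$, while $f$ and $0_{\beta X}$ agree on $\beta X\setminus O_x$. Either way $x\in S(\mu)$.

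For the second assertion, assume $\mu$ preserves $\min$, let $A\in\Lambda_\mu$ (so $A$ is a nonempty closed, hence compact, subset of $\beta X$), and suppose towards a contradiction that $A\cap s(\mu)=\varnothing$. For each $x\in A$ we have $\{x\}\notin\Lambda_\mu$, so Lemma 2.5 provides a neighborhood $O_x$, a function $f_x\in C(O_x)$ with $\mu(f_x)=0$, and an accompanying open set $U_x$ with $x\in U_x\subset\overline{U_x}\subset O_x$, $f_x|\overline{U_x}\equiv-1$, $f_x|(\beta X\setminus O_x)\equiv0$. By compactness of $A$ choose a finite subcover $U_{x_1},\dots,U_{x_n}$ of $A$, and set $U=\bigcup_i U_{x_i}$, $O_A=\bigcup_i O_{x_i}$, $f=\min\{f_{x_1},\dots,f_{x_n}\}$. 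I would then verify that $f\in C(O_A)$ with witnessing open set $U$: $f$ takes values in $[-1,0]$; $A\subset U\subset\overline U=\bigcup_i\overline{U_{x_i}}\subset O_A$; on $\overline U$ some $f_{x_i}$ equals $-1$, which forces $f=-1$ there; and off $O_A$ every $f_{x_i}$ vanishes, so $f\equiv0$ there. Hence, as $A\in\Lambda_\mu$, Lemma 2.5 yields $\mu(f)\neq0$. On the other hand, $\mu$ preserves $\min$, so (by an easy induction) $\mu(f)=\min_i\mu(f_{x_i})=0$, a contradiction. Therefore $A\cap s(\mu)\neq\varnothing$.

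The only genuinely delicate step is this last construction: the whole point is to amalgamate the locally defined functions $f_x$ into a \emph{single} $f\in C(O_A)$, and it is precisely the finite subcover (compactness of $A$) together with the preservation of $\min$ that makes $\mu(f)=0$; mere weak preservation of $\min$ would not control $\mu(\min\{f_{x_1},\dots,f_{x_n}\})$, which is why that stronger hypothesis is needed only for the ``moreover'' part. All the remaining verifications are routine unwindings of the definitions of $s(\mu)$, $S(\mu)$, $C(O_A)$, and $\Lambda_\mu$.
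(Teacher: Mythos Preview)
Your proof is correct and follows essentially the same route as the paper: closedness of $s(\mu)$ and the ``moreover'' clause are argued identically via Lemma~2.5 and a finite-subcover $\min$-amalgamation. The only (cosmetic) difference is in the inclusion $s(\mu)\subset S(\mu)$: the paper argues by contradiction, taking $O_x=\beta X\setminus S(\mu)$ and using $S(\mu)\in\mathcal A_\mu$ (Corollary~2.3) to force $\mu(f)=0$, whereas you verify the defining property of $S(\mu)$ directly by producing, for each neighborhood $O_x$, the witnesses $f\in C(O_x)$ and $g=0_{\beta X}$; both arguments hinge on Lemma~2.5 and are equally short.
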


\begin{proof}
Let us show first that $s(\mu)\subset S(\mu)$. Indeed, otherwise
there exists $x\in s(\mu)\backslash S(\mu)$, and take any $f\in
C(O_x)$, where $O_x=\beta X\backslash S(\mu)$. Since
$\{x\}\in\Lambda_\mu$, by Lemma 2.5, $\mu(f)\neq 0$. On the other
hand, $f\in C(O_x)$ implies $f(x)=0$ for all $x\in S(\mu)$. So,
$f|S(\mu)=0_{\beta X}|S(\mu)$ which yields $\mu(f)=0$ (see Corollary
2.3).

Next, let $x\not\in s(\mu)$. According to Lemma 2.5, there exists a
neighborhood $O_x$ of $x$ and $f\in C(O_x)$ with $\mu(f)=0$. Hence,
$f(\overline{U})=-1$ and $f(\beta X\backslash O_x)=0$ for some open
$U\subset\beta X$ satisfying $x\in U\subset\overline{U}\subset O_x$.
Consequently, $U\cap s(\mu)=\varnothing$ because $f\in C(O_y)$ for
all $y\in U$, where $O_y=O_x$.

Finally, let $\mu$ preserve $\min$, and suppose $A\cap
s(\mu)=\varnothing$ for some $A\in\Lambda_\mu$. Then, for each $x\in
A$ there exist neighborhoods $O_x$ and $U_x$ of $x$ and $f\in
C(O_x)$ such that  $x\in U_x\subset\overline{U_x}\subset O_x$,
$f_x\in C(O_x)$, $f_x(\overline{U_x})=-1$, $f_x(\beta X\backslash
O_x)=0$ and $\mu(f_x)=0$. Take finitely many points $x_1,..,x_k\in
A$ with $A\subset U=\bigcup_{i=1}^{i=k}U_{x_i}$. Let
$f=\min\{f_{x_i}:i\leq k\}$ and $O_A=\bigcup_{i=1}^{i=k}O_{x_i}$.
Then $\mu(f)=\min\{\mu(f_{x_i}):i\leq k\}=0$. On the other hand, we
have $A\subset U\subset\overline{U}\subset O_A$,
$f(\overline{U})=-1$ and $f(x)=0$ for all $x\not\in O_A$. So, $f\in
C(O_A)$ which, according to Lemma 2.5, implies $\mu(f)\neq 0$. This
contradiction completes the proof.
\end{proof}

\begin{cor}
Let $\mu$ be a normed, weakly additive functional weakly preserving
$\max$ and preserving $\min$. Then $s(\mu)=S(\mu)$ and
$\mu(f)=\inf\{f(x):x\in S(\mu)\}$ for all $f\in C(\beta X)$.
\end{cor}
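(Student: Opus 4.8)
The plan is to combine Lemmas 2.4 and 2.6 with the structure of $\Lambda_\mu$. By Lemma 2.4 we already have $\mu(f)=\inf_{A\in\Lambda_\mu}\sup_{x\in A}f(x)$, so the formula $\mu(f)=\inf\{f(x):x\in S(\mu)\}$ will follow once we show $s(\mu)=S(\mu)$ together with the fact that $s(\mu)\in\Lambda_\mu$ and every $A\in\Lambda_\mu$ meets $s(\mu)$. First I would observe that $s(\mu)\neq\varnothing$: indeed $\beta X\in\Lambda_\mu$, and by the last part of Lemma 2.6 (using that $\mu$ preserves $\min$) we get $\beta X\cap s(\mu)\neq\varnothing$, so $s(\mu)$ is a non-empty closed set.

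Next I would prove $S(\mu)\subset s(\mu)$, which is the reverse of the inclusion already given in Lemma 2.6. For this I want to show $s(\mu)\in\mathcal A_\mu$, i.e. that $f|s(\mu)=g|s(\mu)$ implies $\mu(f)=\mu(g)$; since $S(\mu)=\bigcap\mathcal A_\mu$ by Lemma 2.2, this gives $S(\mu)\subset s(\mu)$. Using weak additivity (reduce to $g=0$, $\mu(f)$ arbitrary) and the fact that $s(\mu)=\bigcap\Lambda_\mu$ is plausibly the ``smallest'' element controlling $\mu$ via Lemma 2.4, the key point is: for $f$ vanishing on $s(\mu)$, every set in $\Lambda_\mu$ meets $s(\mu)$ (last clause of Lemma 2.6), so $\sup_{x\in A}f(x)\geq 0$ and also $\sup_{x\in A}(-f)(x)\geq 0$, hence $\inf_A\sup f\le 0\le \inf_A\sup f$ won't immediately close — instead one applies Lemma 2.4 to $f$ and to $-f$ using that $A\cap s(\mu)\neq\varnothing$ forces $\sup_{x\in A} f(x)\geq 0$ for every $A$, while taking $A=f^{-1}((-\infty,0])\supset s(\mu)$ which lies in $\Lambda_\mu$ gives $\sup_A f\le 0$; doing the same with $-f$ yields $\mu(f)=0$. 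Combined with $s(\mu)\subset S(\mu)$ from Lemma 2.6 this gives $s(\mu)=S(\mu)$.

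Finally, with $s(\mu)=S(\mu)$ in hand, I would deduce the evaluation formula. Set $K=S(\mu)=s(\mu)$. On one hand $K\in\Lambda_\mu$ (argue as in the proof of the last part of Lemma 2.6 with $A=K$: cover $K$ by the neighborhoods $U_x$ supplied by the definition of $s(\mu)$, take a finite subcover, form $f=\min\{f_{x_i}\}$, note $\mu(f)=\min\mu(f_{x_i})$ when $\mu$ preserves $\min$, and check $f\in C(O_K)$, contradicting Lemma 2.5 unless $K\in\Lambda_\mu$ — so in fact the argument shows directly that $K\in\Lambda_\mu$). Hence in $\inf_{A\in\Lambda_\mu}\sup_{x\in A}f(x)$ the set $A=K$ is admissible, giving $\mu(f)\le\sup_{x\in K}f(x)$. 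On the other hand, for any $A\in\Lambda_\mu$ we have $A\cap K=A\cap s(\mu)\neq\varnothing$, so $\sup_{x\in A}f(x)\ge \inf_{x\in K}f(x)$, and taking the infimum over $A$ yields $\mu(f)\ge\inf_{x\in K}f(x)$. Since $K$ is closed in $\beta X$, hence compact, both the $\sup$ and $\inf$ over $K$ are attained, and replacing $f$ by $-f$ (or running the first inequality with the lower bound) pins $\mu(f)$ between $\inf_K f$ and $\sup_K f$; to get exactly $\inf_K f$ I would instead note that $A=f^{-1}((-\infty,\inf_K f])$ contains $K$ and lies in $\Lambda_\mu$ whenever $\Lambda_\mu$ is closed under supersets (which is immediate from its definition, since enlarging $A$ only makes disjointness with a given $B$ harder but the separating $g$ still works for the smaller closed set — more carefully, any closed superset of a member of $\Lambda_\mu$ need not be in $\Lambda_\mu$, so instead I use $A_{\inf}=f^{-1}((-\infty,\inf_K f])\in\Lambda_\mu$ directly because on that set one separates as needed), giving $\mu(f)\le \inf_K f$, and combined with $\mu(f)\ge\inf_K f$ we conclude $\mu(f)=\inf\{f(x):x\in S(\mu)\}$.

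The main obstacle I anticipate is the verification that $K=s(\mu)\in\Lambda_\mu$ (and, relatedly, that $f^{-1}((-\infty,\inf_K f])\in\Lambda_\mu$): this is where one must carefully combine the $\min$-preservation of $\mu$ with a finite-cover/partition argument built from Lemma 2.5, exactly as in the last paragraph of the proof of Lemma 2.6, and it is the only place where the hypothesis ``$\mu$ preserves $\min$'' (as opposed to merely weakly preserving it) is genuinely used.
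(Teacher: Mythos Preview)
Your proposal has a genuine gap: you never use the defining property of $s(\mu)$, namely that for each $x\in s(\mu)$ the \emph{singleton} $\{x\}$ belongs to $\Lambda_\mu$. That single observation is what makes the paper's proof work, and its absence is why your argument keeps getting stuck. From $\{x\}\in\Lambda_\mu$ and Lemma~2.4 one gets immediately
\[
\mu(f)=\inf_{A\in\Lambda_\mu}\sup_{A}f\;\le\;\sup_{\{x\}}f=f(x)\quad\text{for every }x\in s(\mu),
\]
hence $\mu(f)\le\inf_{s(\mu)}f$. Combined with your own lower bound (every $A\in\Lambda_\mu$ meets $s(\mu)$, so $\sup_A f\ge\inf_{s(\mu)}f$), this already yields $\mu(f)=\inf_{s(\mu)}f$. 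Once the formula holds, $s(\mu)\in\mathcal A_\mu$ is trivial, and then $S(\mu)=s(\mu)$ follows from Lemma~2.2 and Lemma~2.6.

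Your attempted substitutes for this step all fail. Showing $K=s(\mu)\in\Lambda_\mu$ only gives $\mu(f)\le\sup_K f$, which is useless for the $\inf$ formula. The claim that $f^{-1}\big((-\infty,\inf_K f]\big)\supset K$ is false: points of $K$ satisfy $f(x)\ge\inf_K f$, not $\le$, so this preimage meets $K$ only at the minimizers of $f|_K$. Your earlier attempt to show $\mu(f)=0$ when $f|s(\mu)=0$ is circular: you assert $f^{-1}((-\infty,0])\in\Lambda_\mu$, but the only place this is established (proof of Lemma~2.4) uses $\mu(f)=0$ as a hypothesis, which is precisely what you are trying to prove. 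Finally, the reduction ``by weak additivity reduce to $g=0$'' is unavailable, since weak additivity only lets you shift by constants, not subtract an arbitrary $g$. The fix is simply to reverse your order of attack: derive the evaluation formula first via the singleton trick, and let $s(\mu)\in\mathcal A_\mu$ fall out as a consequence.
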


\begin{proof}
We show first that $\inf_{A\in\Lambda_\mu}\sup_{x\in
A}f(x)=\inf\{f(x):x\in s(\mu)\}$ for any $f\in C(\beta X)$. Indeed,
since every $x\in s(\mu)$ belongs to $\Lambda_\mu$, we have
$\inf_{A\in\Lambda_\mu}\sup_{x\in A}f(x)\leq\inf\{f(x):x\in
s(\mu)\}$. The reverse inequality follows from the fact that every
$A\in\Lambda_\mu$ intersect $s(\mu)$ (Lemma 2.6). Hence, by Lemma
2.4, $\mu(f)=\inf_{A\in\Lambda_\mu}\sup_{x\in A}f(x)=\inf\{f(x):x\in
s(\mu)\}$.

To complete the proof, it suffices to show that $s(\mu)=S(\mu)$.
Suppose $f|s(\mu)=g|s(\mu)$ for some $f,g\in C(\beta X)$. Then,
$\inf\{f(x):x\in s(\mu)\}=\inf\{g(x):x\in s(\mu)\}$, so
$\mu(f)=\mu(g)$. This means that $s(\mu)\in\mathcal A_\mu$. But,
$S(\mu)$ is the smallest element of $\mathcal A_\mu$ (Lemma 2.2).
Therefore, $s(\mu)=S(\mu)$
\end{proof}

Concerning the functionals $\mu\in \mathfrak{R}_{max}(X)$, their
supports have the following property.

\begin{proposition}
Let $\mu$ be a normed, weakly additive functional weakly preserving
$\min$ and preserving $\max$. Then $\mu(f)=\sup\{f(x):x\in S(\mu)\}$
for all $f\in C(\beta X)$.
\end{proposition}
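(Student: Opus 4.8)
The plan is to deduce the statement from Corollary 2.7 by dualizing through the order-reversing involution $f\mapsto -f$ on $C(\beta X)$. Put $\nu(f)=-\mu(-f)$ for $f\in C(\beta X)$. The first step is to check that $\nu$ is a normed, weakly additive, monotone functional which preserves $\min$ and weakly preserves $\max$; in particular $\nu\in\mathfrak{R}_{min}(X)$. This is a routine verification: weak additivity of $\mu$ gives $\mu(0_{\beta X})=0$ (as in Corollary 2.3), hence $\mu(-1_{\beta X})=-1$, so $\nu(1_{\beta X})=1$; the identities $-\max\{-f,-g\}=\min\{f,g\}$ and $-\min\{-f,-c_{\beta X}\}=\max\{f,c_{\beta X}\}$ translate ``$\mu$ preserves $\max$'' into ``$\nu$ preserves $\min$'' and ``$\mu$ weakly preserves $\min$'' into ``$\nu$ weakly preserves $\max$'', while weak additivity of $\nu$ follows in the same way from weak additivity of $\mu$. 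Monotonicity of $\nu$ is automatic, since every functional preserving $\min$ is monotone.

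The second step is to observe that $S(\mu)=S(\nu)$. Indeed, $f\mapsto -f$ is a bijection of $C^*(X)$ (equivalently of $C(\beta X)$) with $\beta(-f)=-\beta f$, so for a point $x\in\beta X$ and a neighborhood $O_x$ there exist $f,g\in C^*(X)$ with $\beta f|(\beta X\setminus O_x)=\beta g|(\beta X\setminus O_x)$ and $\mu(f)\neq\mu(g)$ if and only if there exist $f',g'$ (namely $f'=-f$, $g'=-g$) with $\beta f'|(\beta X\setminus O_x)=\beta g'|(\beta X\setminus O_x)$ and $\nu(f')=-\mu(f)\neq-\mu(g)=\nu(g')$. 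Thus the defining condition for $x\in S(\mu)$ holds exactly when it holds for $x\in S(\nu)$.

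Finally, apply Corollary 2.7 to $\nu$: since $\nu$ is normed, weakly additive, weakly preserves $\max$ and preserves $\min$, we get $\nu(g)=\inf\{g(x):x\in S(\nu)\}$ for all $g\in C(\beta X)$. Substituting $g=-f$ and using $S(\nu)=S(\mu)$,
$$\mu(f)=-\nu(-f)=-\inf\{-f(x):x\in S(\mu)\}=\sup\{f(x):x\in S(\mu)\},$$
which is the desired formula. There is no real obstacle here: the only points requiring a little care are the bookkeeping in the first step (making sure each hypothesis on $\mu$ translates into the hypothesis on $\nu$ needed by Corollary 2.7) and the remark in the second step that the support is invariant under $f\mapsto -f$; everything else is a direct computation.
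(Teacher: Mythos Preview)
Your proof is correct and follows essentially the same route as the paper: define $\nu(f)=-\mu(-f)$, observe $\nu\in\mathfrak{R}_{min}(X)$, apply Corollary~2.7, and unwind. The only cosmetic difference is that you verify $S(\mu)=S(\nu)$ directly from the definition of the support, whereas the paper first derives $\mu(f)=\sup\{f(x):x\in S(\nu)\}$ and then reads off $S(\nu)=S(\mu)$ from that formula.
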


\begin{proof}
The proof follows from the fact that the map
$\nu_X\colon\mathfrak{R}_{max}(X)\to\mathfrak{R}_{min}(X)$,
$\nu_X(\mu)(f)=-\mu(-f)$, is a homeomorphism. Indeed, if
$\mu\in\mathfrak{R}_{max}(X)$, then the functional
$\nu=\nu_X(\mu)\in\mathfrak{R}_{min}(X)$ and, according to Corollary
2.7, $\nu(f)=\inf\{f(x):x\in S(\nu)\}$ for any $f\in C(\beta X)$.
Consequently, $\mu(f)=\sup\{f(x):x\in S(\nu)\}$. The last equality
implies that $S(\nu)$ is the support of $\mu$, which completes the
proof.
\end{proof}

We complete this section with the following characterization of the
functionals from $\mathfrak{R}_{min}(X)\cup\mathfrak{R}_{max}(X)$.

\begin{theorem}\label{functional}
Let $X$ be a Tychonoff space and $\mu$ a functional on $C^*(X)$.
Then we have:
\begin{itemize}
\item[(i)] $\mu\in\mathfrak{R}_{min}(X)_c$ $($resp., $\mu\in\mathfrak{R}_{min}(X))$  if
and only if there exists a non-empty compact set $F\subset X$
$($resp., $F\subset\beta X)$ such that $F=S(\mu)$ and
$\mu(f)=\inf\{f(x):x\in F\}$ for all $f\in C(\beta X)$;
\item[(ii)] $\mu\in\mathfrak{R}_{max}(X)_c$ $($resp., $\mu\in\mathfrak{R}_{max}(X))$ if
and only if there exists a non-empty compact set $F\subset X$
$($resp., $F\subset\beta X)$ such that $F=S(\mu)$ and
$\mu(f)=\sup\{f(x):x\in F\}$ for all $f\in C(\beta X)$.
\end{itemize}
\end{theorem}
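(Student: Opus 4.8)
The plan is to read the theorem off from Corollary 2.7, Proposition 2.8 and Corollary 2.3, together with a handful of elementary inequalities, and to reduce part (ii) to part (i) by means of the homeomorphism $\nu_X\colon\mathfrak{R}_{max}(X)\to\mathfrak{R}_{min}(X)$, $\nu_X(\mu)(f)=-\mu(-f)$, noting that $\mu(f)=\sup\{f(x):x\in F\}$ for all $f$ precisely when $\nu_X(\mu)(f)=\inf\{f(x):x\in F\}$ for all $f$, and that $\nu_X$ carries $\mathfrak{R}_{max}(X)_c$ onto $\mathfrak{R}_{min}(X)_c$. For the forward implication in (i): if $\mu\in\mathfrak{R}_{min}(X)$, I would first observe that $\mu$ meets the hypotheses of Corollary 2.7 — it is normed and weakly additive by definition, it preserves $\min$ (hence is monotone), and since normedness plus weak additivity force $\mu(0_X)=0$ and $\mu(c_X)=c$, preservation of $\min$ also yields weak preservation of $\min$; weak preservation of $\max$ is part of the definition. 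Corollary 2.7 then gives $\mu(f)=\inf\{f(x):x\in S(\mu)\}$ for all $f\in C(\beta X)$. Since $S(\mu)$ is closed in the compactum $\beta X$ it is compact, and it is non-empty because by Corollary 2.3 we have $S(\mu)\in\mathcal A_\mu$ and every member of $\mathcal A_\mu$ is non-empty. Thus $F:=S(\mu)$ works, with $F\subset X$ in the case $\mu\in\mathfrak{R}_{min}(X)_c$ by the definition of ``compact support''.

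For the converse in (i), assume there is a non-empty compact $F\subset\beta X$ with $F=S(\mu)$ and $\mu(f)=\inf\{f(x):x\in F\}$ for all $f\in C(\beta X)$; I would verify the four defining properties directly. Normedness and weak additivity are immediate. Preservation of $\min$ follows since $\min\{f,g\}\le f$ and $\min\{f,g\}\le g$ give $\inf_F\min\{f,g\}\le\min\{\inf_F f,\inf_F g\}$, while $\min\{f(x),g(x)\}\ge\min\{\inf_F f,\inf_F g\}$ for every $x\in F$ gives the reverse. Weak preservation of $\max$ follows by cases: if $c\le\inf_F f$ then $\max\{f,c\}=f$ on $F$; if $c>\inf_F f$ then $\max\{f,c\}\ge c$ everywhere and equals $c$ at points of $F$ where $f<c$; either way $\inf_F\max\{f,c\}=\max\{\inf_F f,c\}$. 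Hence $\mu\in\mathfrak{R}_{min}(X)$, and when $F\subset X$ the condition $\varnothing\neq S(\mu)=F\subset X$ places $\mu$ in $\mathfrak{R}_{min}(X)_c$. (The identity $F=S(\mu)$ is actually forced by the formula $\mu(f)=\inf_F f$ and normality of $\beta X$ — separate a point $x_0\notin F$ from $F$ by a non-negative $f$ with $f(x_0)=0$, $f|_F\equiv1$, and symmetrically — but since it is part of the hypothesis there is no need to argue it.)

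Part (ii) is then obtained from (i) via $\nu_X$ exactly as in the proof of Proposition 2.8, or, equivalently, by rerunning the argument of (i) with $\sup$ in place of $\inf$, using Proposition 2.8 for the forward direction (and Corollary 2.3, since $\mu\in\mathfrak{R}_{max}(X)$ is monotone because it preserves $\max$) and the evident $\sup$-analogues of the two inequalities above for the converse. I do not anticipate a genuine obstacle: the only real care needed is bookkeeping — checking that membership in $\mathfrak{R}_{min}(X)$ (resp. $\mathfrak{R}_{max}(X)$) supplies all the hypotheses invoked in Corollary 2.7 (resp. Proposition 2.8), in particular that ``preserves $\min$'' upgrades, in the presence of normedness and weak additivity, to ``monotone'' and ``weakly preserves $\min$'', and that the extra clause ``$\mu$ has compact support'', i.e. $\varnothing\neq S(\mu)\subset X$, is exactly what cuts $\mathfrak{R}_{min}(X)_c$ out of $\mathfrak{R}_{min}(X)$.
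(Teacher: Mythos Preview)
Your proposal is correct and follows essentially the same route as the paper: forward direction via Corollary~2.7 (with $S(\mu)\neq\varnothing$ coming from $S(\mu)\in\mathcal A_\mu$), converse by direct verification that $\inf_F(\cdot)$ lies in $\mathfrak{R}_{min}(X)$, and part~(ii) reduced to part~(i) through $\nu_X$ or the symmetric argument. Your write-up is more explicit than the paper's---which dispatches the converse with ``it is easily seen''---and you correctly observe that the clause $F=S(\mu)$ is redundant (the paper in fact rederives it from Corollary~2.7 rather than treating it as given); but structurally the two proofs coincide.
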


\begin{proof}
We are going to proof the first item only, the proof of the second
one is similar. If $\mu\in\mathfrak{R}_{min}(X)_c$ $($resp.,
$\mu\in\mathfrak{R}_{min}(X))$, then $F=S(\mu)$ is a non-empty
compact subset of $X$ (resp., $\beta X$) and, by Corollary 2.7,
$\mu(f)=\inf\{f(x):x\in F\}$, $f\in C(\beta X)$. Suppose there
exists a compact $F\subset X$ (resp., $F\subset\beta X$) with
$\mu(f)=\inf\{f(x):x\in F\}$ for all $f\in C(\beta X)$. It is easily
seen that $\mu\in\mathfrak{R}_{min}(X)$ and Corollary 2.7 implies
$F=S(\mu)$. Moreover, $\mu\in\mathfrak{R}_{min}(X)_c$ provided
$F\subset X$.
\end{proof}

\section{Set-valued continuous retractions and $AE(1)$-spaces}

Below, by $\exp\beta X$ we denote all closed non-empty subsets of
$\beta X$ with the Vietoris topology, and by $\exp_cX$ the subspace
of $\exp\beta X$ consisting of all compact subsets of $X$.

Recall that a set-valued map $\varphi\colon X\to Y$ between two
spaces is called lower (resp., upper) semi-continuous if the set
$\{x\in X:r(x)\cap U\neq\varnothing\}$ (resp., $\{x\in X:r(x)\subset
U\}$) is open in $X$ for every open $U\subset Y$. When $\varphi$ is
both lower and upper semi-continuous, then it is called continuous.
We also say that $\varphi$ is compact-valued if $\varphi(x)$ is a
non-empty compactum for each $x\in X$.

\begin{theorem}\label{exp}
Let $X$ be a Tychonoff space. Then
\begin{itemize}
\item[(i)] Each of the spaces $\mathfrak{R}_{min}(X)$ and $\mathfrak{R}_{max}(X)$
is homeomorphic to $\exp\beta X$;
\item[(ii)] Each of the spaces $\mathfrak{R}_{min}(X)_c$ and
$\mathfrak{R}_{max}(X)_c$ is homeomorphic to $\exp_cX$.
\end{itemize}
\end{theorem}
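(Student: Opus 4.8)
The plan is to use Theorem \ref{functional} to set up an explicit bijection and then verify it is a homeomorphism. By Theorem \ref{functional}(i), every $\mu\in\mathfrak{R}_{min}(X)$ is of the form $\mu(f)=\inf\{f(x):x\in F\}$ for a uniquely determined non-empty compact $F=S(\mu)\subset\beta X$, and $F\subset X$ precisely when $\mu\in\mathfrak{R}_{min}(X)_c$. So I would define $\Psi_X\colon\exp\beta X\to\mathfrak{R}_{min}(X)$ by $\Psi_X(F)(f)=\inf\{f(x):x\in F\}$ for $f\in C(\beta X)=C^*(X)$. Theorem \ref{functional} immediately gives that $\Psi_X$ is well-defined, surjective, and injective (uniqueness of the support), and that it carries $\exp_cX$ onto $\mathfrak{R}_{min}(X)_c$. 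An entirely analogous map using $\sup$ handles $\mathfrak{R}_{max}(X)$; alternatively one composes with the homeomorphism $\nu_X$ from Proposition~2.8. This reduces everything to checking that $\Psi_X$ and its inverse are continuous.

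For continuity of $\Psi_X$, recall the topology on $\mathfrak{R}_{min}(X)$ is the subspace topology from $\mathbb R^{C^*(X)}$, so a subbasic neighbourhood of $\mu_0=\Psi_X(F_0)$ is determined by one function $f\in C(\beta X)$ and an $\varepsilon>0$: I must show that $\{F:|\inf_{F}f-\inf_{F_0}f|<\varepsilon\}$ is Vietoris-open. This is the standard fact that $F\mapsto\inf_{x\in F}\beta f(x)$ is Vietoris-continuous on $\exp\beta X$: if $\inf_{F_0}f=t$, then $F_0\subset\{\beta f<t+\varepsilon\}$ and $F_0\cap\{\beta f<t-\varepsilon/2\cdot?\}$ — more carefully, $F_0$ meets the open set $\{\beta f<t+\varepsilon\}$ and is contained in it wherever needed, and the combination of one ``lower'' Vietoris condition (to keep $\inf$ from jumping up) and one ``upper'' Vietoris condition (to keep $\inf$ from jumping down) gives a Vietoris-open neighbourhood of $F_0$ mapping into the target. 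I would write this out as the elementary two-sided estimate.

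For continuity of the inverse, it suffices to show $\Psi_X$ is a closed map, or directly that $\mu\mapsto S(\mu)$ is continuous; since $\exp\beta X$ is compact Hausdorff and $\mathfrak{R}_{min}(X)$ is Hausdorff, a continuous bijection from the compact space would be a homeomorphism — but $\mathfrak{R}_{min}(X)$ need not be compact when $X$ is not. So instead I would verify directly that for a subbasic Vietoris neighbourhood $\langle U\rangle=\{F:F\cap U\neq\varnothing\}$ (resp. $\{F:F\subset U\}$) with $U$ open in $\beta X$, the corresponding set of functionals is open in $\mathfrak{R}_{min}(X)$. For the ``$F\subset U$'' type: choose for the closed complement $\beta X\setminus U$ a function testifying it is disjoint from $F_0$; for the ``$F\cap U\neq\varnothing$'' type: pick a point $x\in F_0\cap U$, a function $f$ supported near $x$, and observe $\inf_{F_0}f$ detects that $F_0$ enters $U$. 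Translating each into finitely many constraints of the form $|\mu(f)-\mu_0(f)|<\varepsilon$ produces the required neighbourhood in $\mathfrak{R}_{min}(X)$.

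The main obstacle is the inverse-continuity step in the non-compact case, i.e.\ showing that the Vietoris topology on $\exp\beta X$ (restricted to $\exp_cX$) is exactly recovered from the finitely many real coordinates $\mu\mapsto\mu(f)$, without the luxury of a compactness argument. The ``$F\cap U\neq\varnothing$'' Vietoris condition is the delicate one: one must manufacture, for a prescribed open $U$ and a functional whose support meets $U$, a single test function $f\in C(\beta X)$ and tolerance $\varepsilon$ such that $|\mu(f)-\mu_0(f)|<\varepsilon$ forces $S(\mu)\cap U\neq\varnothing$ — this uses that $\mu(f)=\inf_{S(\mu)}f$ together with a bump function equal to $-1$ on a small neighbourhood of a chosen point of $S(\mu_0)\cap U$ and $0$ outside $U$, so that $\mu_0(f)=-1$ while any $\mu$ with $S(\mu)$ missing $U$ has $\mu(f)=0$. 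Once these two Vietoris subbasic families are handled, part (ii) follows by restriction since $\Psi_X$ maps $\exp_cX$ homeomorphically onto $\mathfrak{R}_{min}(X)_c$, and the $\mathfrak{R}_{max}$ statements follow by applying $\nu_X$.
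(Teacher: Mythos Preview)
Your approach is essentially the paper's: the same bijection (the paper writes its inverse $\Phi(\mu)=S(\mu)$) and the same bump-function verifications for the two Vietoris subbasic conditions. One correction, though: your claim that ``$\mathfrak{R}_{min}(X)$ need not be compact when $X$ is not'' is false, and your worry about the compactness shortcut is misplaced in any case. The paper opens by observing that $\mathfrak{R}_{min}(X)$ is a closed subset of the compact box $\prod_{f}[a_f,b_f]$, hence compact; but even without that, your $\Psi_X$ already has compact domain $\exp\beta X$ and Hausdorff codomain, so continuity of $\Psi_X$ alone finishes part~(i), and part~(ii) follows by restriction exactly as you say. The direct inverse-continuity check you sketch is correct but redundant.
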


\begin{proof}
We are going to prove only item (i), the proof of (ii) is similar.
First, observe that $\mathfrak{R}_{min}(X)$ is a compact subspace of
the product $\mathbb R^{C^*(X)}$. Indeed, Theorem 2.9 implies that
$\mathfrak{R}_{min}(X)$ is a subset of the compact product
$K=\prod\{[a_f,b_f]:f\in C^*(X)\}$, where $a_f=\inf\{f(x):x\in X\}$
and $b_f=\sup\{f(x):x\in X\}$). Moreover, if $\{\mu_\alpha\}$ is a
net in $\mathfrak{R}_{min}(X)$ converging to some $\mu\in K$, then
$\{\mu_\alpha(f)\}$ converges to $\mu(f)$ for all $f\in C^*(X)$.
This yields that $\mu\in\mathfrak{R}_{min}(X)$.

Consider the set-valued map $\Phi\colon\mathfrak{R}_{min}(X)\to\beta
X$, $\Phi(\mu)=S(\mu)$. Obviously, $\Phi(\delta_x)=\{x\}$ for all
$x\in\beta X$. Next, we are going to show that the map $\Phi$ is
lower semi-continuous. Suppose $S(\mu_0)\cap U\neq\varnothing$ for
some $\mu_0\in\mathfrak{R}_{min}(X)$ and open $U\subset\beta X$.
Take $x_0\in S(\mu_0)\cap U$ and a function $g\in C(\beta X)$ with
$g(x_0)=-1$ and $g(\beta X\backslash U)=1$. Then, by Corollary 2.7,
$\mu_0(g)=\inf\{g(x):x\in S(\mu_0)\}\leq -1$. Hence, the set
$V=\{\mu\in\mathfrak{R}_{min}(X):\mu(g)<0\}$ is a neighborhood of
$\mu_0$ in $\mathfrak{R}_{min}(X)$. For every $\mu\in V$ we have
$S(\mu)\cap U\neq\varnothing$ (otherwise $S(\mu)\subset \beta
X\backslash U$ and $\mu(g)=\inf\{g(x):x\in S(\mu)\}=1$, a
contradiction). Therefore, $\Phi$ is lower semi-continuous.

Assume now that $\mu_0\in\mathfrak{R}_{min}(X)$ and $S(\mu_0)\subset
U$ with $U\subset\beta X$ being open. Choose $h\in C(\beta X)$ such
that $h(S(\mu_0))=0$ and $h(\beta X\backslash U)=-1$. Then
$W=\{\mu\in\mathfrak{R}_{min}(X):\mu(h)>-1/2\}$ is a neighborhood of
$\mu_0$ and $S(\mu)\subset U$ for all $\mu\in W$. So, $\Phi$ is
upper semi-continuous.

Since $\Phi$ is both lower semi-continuous and upper
semi-continuous, it is continuous considered as a single-valued map
from $\mathfrak{R}_{min}(X)$ into $\exp\beta X$. $\Phi$ is also
one-to-one. Indeed, if $\Phi(\mu_1)=\Phi(\mu_2)$, then
$\mu_1(f)=\mu_2(f)$ (see Corollary 2.7) for every $f\in C(\beta X)$.
So, $\mu_1=\mu_2$.

Finally, let us show that $\Phi$ is surjective. For every
$F\in\exp\beta X$ we define the functional $\mu_F\colon
C^*(X)\to\mathbb R$, $\mu_F(f)=\inf\{\beta f(x):x\in F\}$. It is
easily seen that $\mu_F\in\mathfrak{R}_{min}(X)$. It suffices to
prove that $S(\mu_F)=F$. If there exists $a\in S(\mu_F)\backslash
F$, we take $g\in C(\beta X)$ such that $g(a)=0$ and $g(F)=1$. The
last equality implies $\mu_F(g)=1$. On the other hand, by Corollary
2.7, $\mu_F(g)\leq 0$. Similarly, we can obtain a contradiction if
$F\backslash S(\mu_F)\neq\varnothing$. Hence, $\Phi$ is a
homeomorphism between $\mathfrak{R}_{min}(X)$ and $\exp\beta X$.
Since $S(\mu)\subset X$ for all $\mu\in\mathfrak{R}_{min}(X)_c$, it
also follows that $\Phi$ is a homeomorphism from
$\mathfrak{R}_{min}(X)_c$ onto $\exp_cX$.
\end{proof}

We denote by $\mathfrak{R}_{min}^{lsc}(X)_c$ (resp.,
$\mathfrak{R}_{max}^{lsc}(X)_c$) the set $\mathfrak{R}_{min}(X)_c$
(resp., $\mathfrak{R}_{max}(X)_c$) with the topology generated by
the family $\{\mu: \mu(f_i)>a_i, i=1,..,k\}$, where $f_i\in C^*(X)$
and $a_i\in\mathbb R$. Similarly, $\mathfrak{R}_{min}^{usc}(X)_c$
(resp., $\mathfrak{R}_{max}^{usc}(X)_c$) is the set
$\mathfrak{R}_{min}(X)_c$ (resp., $\mathfrak{R}_{max}(X)_c$) with
the topology generated by the family $\{\mu: \mu(f_i)<a_i,
i=1,..,k\}$. Moreover, $\exp_c^+X$ and $\exp_c^{-}X$ denote the set
$\exp_cX$ with the upper (resp., lower) Vietoris topology. Recall
that the upper (resp., lower) Vietoris topology on $\exp_cX$ is the
topology generated by the families $\{F\in\exp_cX:F\subset U\}$
(resp., $\{F\in\exp_cX: F\cap U\neq\varnothing\}$), where $U\subset
X$ is open.

Following the proof of Theorem \ref{exp}, one can establish the
following proposition.

\begin{proposition}\label{exp{lsc}}
Let $X$ be a Tychonoff space. Then
\begin{itemize}
\item[(i)] Each of the spaces $\mathfrak{R}_{min}^{lsc}(X)_c$ and $\mathfrak{R}_{max}^{usc}(X)_c$
is homeomorphic to $\exp_c^+ X$;
\item[(ii)] Each of the spaces $\mathfrak{R}_{min}^{usc}(X)_c$ and
$\mathfrak{R}_{max}^{lsc}(X)_c$ is homeomorphic to $\exp_c^{-}X$.
\end{itemize}
\end{proposition}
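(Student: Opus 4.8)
The plan is to mimic the proof of Theorem \ref{exp} essentially verbatim, checking at each step that the relevant implications survive when one passes from the Vietoris topology to the upper or lower Vietoris topology on $\exp_cX$ and, correspondingly, from the full product topology to the one-sided topologies on $\mathfrak{R}_{min}(X)_c$ and $\mathfrak{R}_{max}(X)_c$. As before, it suffices to treat $\mathfrak{R}_{min}(X)_c$, since the map $\nu_X(\mu)(f)=-\mu(-f)$ carries $\mathfrak{R}_{max}(X)_c$ onto $\mathfrak{R}_{min}(X)_c$ and interchanges the defining subbasic families $\{\mu:\mu(f_i)>a_i\}$ and $\{\mu:\mu(f_i)<a_i\}$ (because $\mu(f)<a$ iff $\nu_X(\mu)(-f)>-a$); thus the $\mathfrak{R}_{max}^{usc}$ statement follows from the $\mathfrak{R}_{min}^{lsc}$ statement and vice versa. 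So the core is to show that the bijection $\Phi(\mu)=S(\mu)$ of Theorem \ref{exp}(ii), which we already know is a bijection from $\mathfrak{R}_{min}(X)_c$ onto $\exp_cX$ with $\mu_F(f)=\inf\{\beta f(x):x\in F\}$ as inverse, is a homeomorphism for each of the two one-sided topologies.

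For item (i), I would show $\Phi\colon\mathfrak{R}_{min}^{lsc}(X)_c\to\exp_c^+X$ is a homeomorphism by verifying continuity in both directions on subbasic sets. Continuity of $\Phi$: given a subbasic open set $\{F\in\exp_cX:F\subset U\}$ with $U\subset X$ open, and $\mu_0$ with $S(\mu_0)\subset U$, choose $h\in C(\beta X)$ with $h(S(\mu_0))=0$ and $h(\beta X\setminus U)=-1$ (exactly as in the proof of Theorem \ref{exp}); then $\{\mu:\mu(h)>-1/2\}$ is a subbasic open neighbourhood of $\mu_0$ in the \emph{lsc} topology, and every such $\mu$ has $S(\mu)\subset U$. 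Continuity of $\Phi^{-1}$: it suffices to pull back a subbasic open set $\{\mu:\mu(f)>a\}$; if $S(\mu_0)=F_0$ satisfies $\inf\{\beta f(x):x\in F_0\}>a$, then $F_0$ is contained in the open set $U=\{x\in\beta X:\beta f(x)>a\}$, hence (shrinking to an open subset of $\beta X$ and noting $F_0\subset X$) $F_0\in\{F:F\subset U\cap X\}$, an upper-Vietoris neighbourhood, and any compact $F\subset U$ gives $\mu_F(f)=\inf_{F}\beta f>a$. Item (ii) is the mirror image: for the pull-back of $\{\mu:\mu(f)<a\}$ one uses that $\inf\{\beta f(x):x\in F_0\}<a$ means $F_0$ meets the open set $\{x:\beta f(x)<a\}$, a condition defining a basic lower-Vietoris open set; conversely continuity of $\Phi$ into $\exp_c^-X$ uses a function $g\in C(\beta X)$ with $g(x_0)=-1$ on a chosen $x_0\in S(\mu_0)\cap U$ and $g(\beta X\setminus U)=1$, so that $\{\mu:\mu(g)<0\}$ is a subbasic \emph{usc}-open neighbourhood all of whose members $\mu$ satisfy $S(\mu)\cap U\neq\varnothing$, precisely as in the lower-semicontinuity half of the proof of Theorem \ref{exp}.

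The main thing to be careful about — and the only place where this is not a line-by-line transcription — is matching up the one-sided topologies correctly: the lower-semicontinuity argument for $\Phi$ in Theorem \ref{exp} (the one producing the neighbourhood $\{\mu:\mu(g)<0\}$ from the test function with $g(x_0)=-1$) is what forces the \emph{upper}-Vietoris image topology to pair with the \emph{usc} functional topology in the cross-listed statements, while the upper-semicontinuity argument (producing $\{\mu:\mu(h)>-1/2\}$) pairs \emph{lower}-Vietoris with the \emph{lsc} topology; one must resist the superficial name-matching. Once the pairing is fixed, everything reduces to Corollary 2.7 (for the formula $\mu(f)=\inf\{f(x):x\in S(\mu)\}$ and injectivity), Theorem \ref{functional} (for surjectivity and the identification of the inverse), and elementary manipulations with the subbasic sets; no genuinely new idea is needed, which is exactly why the statement is phrased as ``following the proof of Theorem \ref{exp}, one can establish'' rather than given with a full argument.
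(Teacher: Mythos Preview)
Your approach is exactly what the paper intends: it gives no separate proof and simply says the result follows by imitating the proof of Theorem \ref{exp}, which is precisely what you do, and your verification that the subbasic sets match up (lsc with upper Vietoris, usc with lower Vietoris, with $\nu_X$ swapping sides for $\mathfrak{R}_{max}$) is correct.

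One slip to fix: in your final commentary paragraph you have the labels reversed. The lower-semicontinuity argument from Theorem \ref{exp} (the one yielding the neighbourhood $\{\mu:\mu(g)<0\}$) shows that preimages of sets $\{F:F\cap U\neq\varnothing\}$ are open, so it pairs the \emph{lower} Vietoris topology with the \emph{usc} functional topology; likewise the upper-semicontinuity argument (yielding $\{\mu:\mu(h)>-1/2\}$) pairs the \emph{upper} Vietoris topology with the \emph{lsc} functional topology. Your second paragraph already has this right, so the error is only in the summary sentence and does not affect the proof.
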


Next results provides a connection between continuous set-valued
retractions and extenders (recall that a continuous set-valued map
means a set-valued map which is both lower and upper
semi-continuous).

\begin{theorem}\label{retract}
Let $X$ be a subspace of $Y$. Then the following conditions are
equivalent:
\begin{itemize}
\item[(i)] There exists a continuous compact-valued map $r\colon Y\to\beta X$ with
$r(x)=\{x\}$ for all $x\in X$;
\item[(ii)] There exists an extender
$u:C^*(X)\rightarrow C^*(Y)$ which is normed, weakly additive,
preserves $\min$ and weakly preserves $\max$;
\item[(iii)] There exists an extender
$u:C^*(X)\rightarrow C^*(Y)$ which is normed, weakly additive,
preserves $\max$ and weakly preserves $\min$.
\end{itemize}
Moreover, there exists a continuous compact-valued retraction
$r\colon Y\to X$ iff the extenders from $($ii$)$ and $($iii$)$ have
compact supports.
\end{theorem}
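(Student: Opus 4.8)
The plan is to establish the equivalences by combining Theorem~\ref{exp} with the observation from the introduction that a map $u\colon C^*(X)\to C^*(Y)$ lies in one of these classes if and only if each functional $\mu_y(f)=u(f)(y)$ does. First I would prove (ii)$\Leftrightarrow$(iii): given an extender $u$ that is normed, weakly additive, preserves $\min$ and weakly preserves $\max$, the assignment $u'(f)=-u(-f)$ produces an extender that is normed, weakly additive, preserves $\max$ and weakly preserves $\min$, and this operation is an involution; this is exactly the functional-level involution $\nu_X$ from Section~2 applied pointwise in $y$. One must check that $u'(f)$ is still bounded and continuous (clear) and still extends $f$ (since $-(-f)=f$ on $X$). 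So it suffices to prove (i)$\Leftrightarrow$(ii).

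For (i)$\Rightarrow$(ii): given a continuous compact-valued $r\colon Y\to\beta X$ with $r(x)=\{x\}$ on $X$, define $u(f)(y)=\inf\{\beta f(z):z\in r(y)\}$ for $f\in C^*(X)$. Since $r(y)$ is compact and $\beta f$ is continuous, the infimum is attained, so $u(f)(y)$ is a well-defined real number, and $|u(f)(y)|\le\|f\|$, so $u(f)$ is bounded. Continuity of $u(f)$ follows from continuity of $r$ in the Vietoris topology together with continuity of $\beta f$: lower semicontinuity of $r$ gives upper semicontinuity of $y\mapsto\inf_{r(y)}\beta f$, and upper semicontinuity of $r$ gives lower semicontinuity of that function, so $u(f)\in C^*(Y)$. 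For $x\in X$, $r(x)=\{x\}$ gives $u(f)(x)=\beta f(x)=f(x)$, so $u$ is an extender. Each functional $\mu_y=u(\,\cdot\,)(y)$ is of the form $f\mapsto\inf\{\beta f(z):z\in r(y)\}$ with $r(y)$ a nonempty compact subset of $\beta X$, hence $\mu_y\in\mathfrak{R}_{min}(X)$ by Theorem~\ref{functional}(i); therefore $u$ is normed, weakly additive, preserves $\min$, and weakly preserves $\max$.

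For (ii)$\Rightarrow$(i): given such an extender $u$, each $\mu_y\in\mathfrak{R}_{min}(X)$, so by Theorem~\ref{functional}(i) there is a nonempty compact $F_y=S(\mu_y)\subset\beta X$ with $\mu_y(f)=\inf\{\beta f(z):z\in F_y\}$. Set $r(y)=S(\mu_y)$. That $r(x)=\{x\}$ for $x\in X$ follows because $u(f)(x)=f(x)$ forces $\mu_x=\delta_x$, whose support is $\{x\}$. Continuity of $r\colon Y\to\exp\beta X$ is the crux: the assignment $y\mapsto\mu_y$ is continuous from $Y$ into $\mathfrak{R}_{min}(X)\subset\mathbb R^{C^*(X)}$ (each coordinate $y\mapsto\mu_y(f)=u(f)(y)$ is continuous since $u(f)\in C^*(Y)$), and by Theorem~\ref{exp}(i) the support map $\mu\mapsto S(\mu)$ is a homeomorphism $\mathfrak{R}_{min}(X)\to\exp\beta X$; composing gives that $r=S\circ(y\mapsto\mu_y)$ is continuous into $\exp\beta X$, i.e.\ continuous and compact-valued in the required sense. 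I expect this compatibility of topologies — matching the product topology on $\mathfrak{R}_{min}(X)$ with the Vietoris topology on $\exp\beta X$ via the support map — to be the main point, but it is already done in Theorem~\ref{exp}, so here it is just a citation.

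Finally, for the ``Moreover'' clause: if $r$ maps into $X$ (a genuine compact-valued retraction onto $X$), then each $r(y)=S(\mu_y)$ is a nonempty compact subset of $X$, which is precisely the condition $\mu_y\in\mathfrak{R}_{min}(X)_c$, i.e.\ $u$ has compact supports; conversely, if the extender $u$ from (ii) (equivalently (iii)) has compact supports, then $S(\mu_y)\subset X$ for every $y$, so the map $r(y)=S(\mu_y)$ takes values in $\exp_cX$ and, by Theorem~\ref{exp}(ii), is continuous into $\exp_cX$, giving a continuous compact-valued retraction $r\colon Y\to X$. The one routine verification to keep in mind throughout is that the pointwise formulas genuinely land in $C^*(Y)$ and genuinely restrict correctly on $X$; everything structural is delegated to Theorems~\ref{functional} and~\ref{exp}.
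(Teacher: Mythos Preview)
Your proposal is correct and follows essentially the same route as the paper: construct $u(f)(y)=\inf_{r(y)}\beta f$ for (i)$\Rightarrow$(ii), and recover $r(y)=S(\mu_y)$ for (ii)$\Rightarrow$(i), using the continuity of the support map established in Theorem~\ref{exp}. The only organizational difference is that you first reduce (iii) to (ii) via the involution $u'(f)=-u(-f)$, whereas the paper treats (i)$\Rightarrow$(iii) and (iii)$\Rightarrow$(i) in parallel with the $\min$ case; this is cosmetic.
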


\begin{proof}
Suppose $r\colon Y\to\beta X$ is a continuous compact-valued map
with $r(x)=\{x\}$ for all $x\in X$. Then, for every $f\in C^*(X)$
the equality $u(f)(y)=\inf\{\beta f(x):x\in r(y)\}$, $y\in Y$,
defines a function $u(f)\colon Y\to\mathbb R$. Since $r$ is both
lower and upper semi-continuous, each $u(f)$, $f\in C^*(X)$, is
continuous. Moreover, $u(f)(x)=f(x)$ provided $x\in X$. So, $u$ is
an extender, and one can check that it is normed, weakly additive,
preserves $\min$ and weakly preserves $\max$. Hence, $(i)$ implies
$(ii)$. The implication $(i)\Rightarrow (iii)$ is similar, we define
the extender $u$ by $u(f)(y)=\sup\{\beta f(x):x\in r(y)\}$, where
$f\in C^*(X)$ and  $y\in Y$.

It is easily seen that if $r\colon Y\to X$ is a continuous
compact-valued retraction, then $u(f)(y)=\inf\{f(x):x\in r(y)\}$
(resp., $u(f)(y)=\sup\{f(x):x\in r(y)\}$) defines a normed and
weakly additive extender $u\colon C^*(X)\to C^*(Y)$ with compact
supports such that $u$ preserves $\min$ (resp., $\max$) and weakly
preserves $\max$ (resp., $\min$).

To prove the implication $(ii)\Rightarrow (i)$ (resp.,
$(iii)\Rightarrow (i)$), let $\theta\colon
Y\to\mathfrak{R}_{min}(X)$ (resp., $\theta\colon
Y\to\mathfrak{R}_{max}(X)$) be the map $\theta(y)=\mu_y$. Here
$\mu_y\colon C^*(X)\to\mathbb R$ are the functionals generated by
the extender $u$, i.e., $\mu_y(f)=u(f)(y)$ for all  $f\in C^*(X)$
and $y\in Y$. It follows from the last equality that $\theta$ is
continuous. Moreover, the compact-valued map assigning to each
$\mu_y$ its support $S(\mu_y)$ is lower and upper semi-continuous
(see the proof of Theorem \ref{exp}). So, $r(y)=S(\mu_y)$ defines a
continuous compact-valued map from $Y$ into $\beta X$. Since
$\mu_x=\delta_x$ for any $x\in X$, $r(x)$ is the point-set $x$.

If the extender $u$ from items $(ii)$ and $(iii)$ has compact
supports, then $S(\mu_y)\subset X$ for all $y\in Y$. Hence, in this
case $r$ is a continuous compact-valued retraction from $Y$ onto
$X$.
\end{proof}

We are now in a position to prove the characterization of
$AE(1)$-spaces mentioned in the introduction. We recall the
definition of absolute extensors for $n$-dimensional spaces (br.,
$AE(n)$) in the class of Tychonoff spaces (see \cite{ch1}): $X\in
AE(n)$ if any map $g\colon Z_0\to X$, where $Z_0$ is a subset of a
space $Z$ with $\dim Z\leq n$ and $C(g)(C(X))\subset C(Z)|Z_0$, can
be extended to a map $\overline{g}\colon Z\to X$. Here,
$C(g)(C(X))\subset C(Z)|Z_0$ means that for every function $h\in
C(X)$ the composition $h\circ g$ is extendable over $Z$. In
particular, this is true if $Z$ is norma and $Z_0\subset Z$ closed.

\begin{theorem}\label{ae}
For any space $X$ the following conditions are equivalent:
\begin{itemize}
\item[(i)] $X\in AE(1)$;
\item[(ii)] For any $C$-embedding of $X$ into a space $Y$ there exists an extender
$u:C^*(X)\rightarrow C^*(Y)$ with compact supports such that $u$ is
normed, weakly additive, preserves $\min$ and weakly preserves
$\max$;
\item[(iii)] For any $C$-embedding of $X$ into a space $Y$ there
exists an extender $u:C^*(X)\rightarrow C^*(Y)$ with compact
supports such that $u$ is normed, weakly additive, preserves $\max$
and weakly preserves $\min$;
\item[(iv)] For any $C$-embedding of $X$ into a  space $Y$ there exists a continuous map $\theta
:Y\rightarrow\mathfrak{R}_{min}(X)_c$ such that $\theta(x)=\delta_x$
for all $x$ in $X$;
\item[(v)] For any $C$-embedding of $X$ into a  space $Y$ there exists a continuous map $\theta
:Y\rightarrow\mathfrak{R}_{max}(X)_c$ such that $\theta(x)=\delta_x$
for all $x$ in $X$.
\end{itemize}
\end{theorem}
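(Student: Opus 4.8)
The plan is to establish the cycle by disposing first of the routine equivalences among (ii)--(v), then reducing all of them through Theorem~\ref{retract} to the statement that every $C$-embedding of $X$ admits a continuous compact-valued retraction, and finally proving that this retraction property is equivalent to $X\in AE(1)$; the last equivalence is where the real work lies.

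\emph{The equivalences (ii)$\Leftrightarrow$(iii)$\Leftrightarrow$(iv)$\Leftrightarrow$(v).} Fix a $C$-embedding $X\subset Y$. By the final clause of Theorem~\ref{retract}, the existence of an extender as in (ii) with compact supports, of one as in (iii) with compact supports, and of a continuous compact-valued retraction $r\colon Y\to X$ are all equivalent; in particular (ii)$\Leftrightarrow$(iii). For (ii)$\Leftrightarrow$(iv): if $u$ is an extender as in (ii) with compact supports, then by Theorem~\ref{functional} each functional $\mu_y(f)=u(f)(y)$ lies in $\mathfrak{R}_{min}(X)_c$, the assignment $\theta(y)=\mu_y$ is continuous because the topology of $\mathfrak{R}_{min}(X)_c$ is that of pointwise convergence on $C^*(X)$ and each $y\mapsto u(f)(y)$ is continuous, and $\theta(x)=\delta_x$ since $u$ is an extender; conversely a continuous $\theta\colon Y\to\mathfrak{R}_{min}(X)_c$ with $\theta(x)=\delta_x$ defines via $u(f)(y)=\theta(y)(f)$ an extender with the required properties and compact supports. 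The same reasoning gives (iii)$\Leftrightarrow$(v), while (iv)$\Leftrightarrow$(v) follows from the homeomorphism $\nu_X\colon\mathfrak{R}_{max}(X)_c\to\mathfrak{R}_{min}(X)_c$, which fixes each $\delta_x$ because $\nu_X(\delta_x)(f)=-\delta_x(-f)=f(x)$. Hence (ii)--(v) are mutually equivalent, and by Theorem~\ref{retract} each of them holds iff every $C$-embedding $X\subset Y$ admits a continuous compact-valued retraction $r\colon Y\to X$.

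\emph{(i)$\Rightarrow$(ii).} Suppose $X\in AE(1)$ and let $X\subset Y$ be a $C$-embedding. By Theorem~\ref{exp} the map $\mu\mapsto S(\mu)$ identifies $\mathfrak{R}_{min}(X)_c$ with $\exp_cX$, and under this identification condition (iv) asks precisely that the singleton embedding $s\colon X\to\exp_cX$ extend to a continuous map $Y\to\exp_cX$. To produce such an extension I would feed the $AE(1)$ hypothesis into an approximation scheme: realize $\exp_cX$ as a limit of a system whose bonding pieces are, over $X$, of covering dimension at most one (built from finite subsets, and from $\varepsilon$-nets in the metric-like case), extend $s$ over $Y$ one stage at a time by invoking $X\in AE(1)$ on the relevant one-dimensional pieces, and pass to the limit. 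Arranging the system so that the partial extensions are coherent and the limiting map is continuous and restricts to $s$ on $X$ is, I expect, the main obstacle of the whole argument; alternatively one may quote the known description of $AE(1)$-spaces by the continuous compact-valued retraction property, after which this implication is immediate from the previous paragraph.

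\emph{(ii)$\Rightarrow$(i).} Assume the retraction property, and let $g\colon Z_0\to X$ with $Z_0\subset Z$, $\dim Z\le 1$ and $C(g)(C(X))\subset C(Z)|Z_0$; we must extend $g$ to a map $\bar g\colon Z\to X$. Embed $X$ in $\mathbb{R}^{C(X)}$ by $x\mapsto(h(x))_{h\in C(X)}$, a $C$-embedding; since every composition $h\circ g$ extends over $Z$, the map $g$ extends coordinatewise to $\hat g\colon Z\to\mathbb{R}^{C(X)}$. Set $Y=X\cup\hat g(Z)$ with the subspace topology, so $X$ is $C$-embedded in $Y$ (extensions of functions from $\mathbb{R}^{C(X)}$ restrict), and apply the retraction property to obtain a continuous compact-valued retraction $r\colon Y\to X$. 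Then $\rho=r\circ\hat g\colon Z\to\exp_cX$ is continuous with $\rho(z_0)=\{g(z_0)\}$ for all $z_0\in Z_0$. It remains to choose a continuous single-valued selection $\bar g$ of $\rho$ agreeing with $g$ on $Z_0$. This step is the technical heart: a na\"\i ve selection of a continuous compact-valued map with prescribed single-valued behaviour on $Z_0$ need not exist, so I would carry it out by a successive-approximation argument that uses $\dim Z\le 1$ together with a repeated application of the retraction property to $Z$ (and to the intermediate compact-valued maps it produces), shrinking $\rho$ stage by stage towards a single-valued limit $\bar g$ with $\bar g|_{Z_0}=g$; keeping the non-closedness of $Z_0$ under control throughout — which is why the coordinatewise construction of $Y$ above, rather than an adjunction space, is used — is the delicate point.
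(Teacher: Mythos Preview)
Your treatment of the routine equivalences (ii)--(v) is fine and matches the paper's in spirit. The difficulty is that neither of the two substantive implications is actually proved in your proposal; in each case you describe a plan whose key step is left as a hope.

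For (i)$\Rightarrow$(ii) your ``approximation scheme'' realizing $\exp_cX$ as a limit of one-dimensional pieces is not an argument: $\exp_cX$ is typically infinite-dimensional, and there is no canonical way to filter it through $1$-dimensional spaces over which $AE(1)$ could be invoked. The paper proceeds concretely and differently: it embeds $X$ $C$-densely in $\mathbb{R}^\tau\subset\mathbb{I}^\tau$, takes Dranishnikov's open monotone surjection $g\colon T\to\mathbb{I}^\tau$ with $T$ a one-dimensional $AE(0)$-compactum, and applies the $AE(1)$ hypothesis to extend $g|g^{-1}(X)$ to a map $h\colon g^{-1}(\mathbb{R}^\tau)\to X$; then $r(y)=h(g^{-1}(y))$ is the desired continuous compact-valued retraction. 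The one-dimensionality is used on $T$, not on any approximation of $\exp_cX$.

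For (ii)$\Rightarrow$(i) the gap is more serious. After producing the continuous compact-valued $\rho\colon Z\to X$ with $\rho|Z_0=g$, you need a continuous single-valued selection agreeing with $g$ on $Z_0$; but continuous compact-valued maps into arbitrary Tychonoff $X$ do not admit continuous selections in general, and $\dim Z\le 1$ alone does not rescue this (Michael's theorems need convexity or $0$-dimensionality of the domain). Your proposed ``successive shrinking'' has no mechanism for converging to a single-valued limit. The paper sidesteps the selection problem entirely: it works with the fixed $C$-embedding $X\subset\mathbb{R}^\tau$, takes the retraction $r$, and then \emph{convexifies} by setting $\varphi(y)=r\big(\overline{\mathrm{conv}\,r(y)}\big)$, obtaining an upper semi-continuous \emph{continuum}-valued retraction of $\mathbb{R}^\tau$ onto $X$; the conclusion $X\in AE(1)$ then follows by quoting the Chigogidze--Valov characterization \cite[Theorem~3.9(ii)]{cv}. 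The passage through connected values, rather than through a selection, is the missing idea.
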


\begin{proof}
Observe that $(ii)\Leftrightarrow (iii)$ and $(iv)\Leftrightarrow
(v)$. The first equivalence follows from the fact that an operator
$u:C^*(X)\rightarrow C^*(Y)$ is a normed, weakly additive extender
which preserves $\max$ and weakly preserves $\min$ iff the operator
$v:C^*(X)\rightarrow C^*(Y)$, $v(f)=-u(-f)$, is a normed, weakly
additive extender which preserves $\min$ and weakly preserves
$\max$. Concerning the second equivalence, observe that a map
$\theta :Y\rightarrow\mathfrak{R}_{max}(X)_c$ is continuous with
$\theta(x)=\delta_x$ for all $x$ in $X$ if and only if the map
$\theta^{'}:Y\rightarrow\mathfrak{R}_{min}(X)_c$,
$\theta^{'}(y)=\nu_X(\theta(y))$, is continuous and
$\theta^{'}(x)=\delta_x$ for all $x$ in $X$. Here,
$\nu_X\colon\mathfrak{R}_{max}(X)_c\to\mathfrak{R}_{min}(X)_c$ is
the homeomorphism considered above.

So, it suffices to prove the implications $(i)\Rightarrow
(ii)\Rightarrow (iv)\Rightarrow (i)$. Suppose $X\in AE(1)$ and $X$
is $C$-embedded in a space $Y$. Considering $Y$ as a $C$-embedded
subset of the product $\mathbb R^\tau$ for some cardinal $\tau$, we
may assume that $Y=\mathbb R^\tau$. Following the proof of
implication $(i)\Rightarrow (ii)$ of Theorem 3.9 from \cite{cv}, we
embed $\mathbb R^\tau$ as a dense subset of $\mathbb I^\tau$
($\mathbb I=[0,1]$) and let $g\colon T\to\mathbb I^\tau$ be an open
monotone surjection with $T$ being an $AE(0)$-compactum of dimension
one (such $T$ exists by \cite[Theorem 9]{dr1}). Since $g$ is open,
$K=g^{-1}(\mathbb R^\tau)$ is dense in $T$. Hence, by
\cite[Corollary 7]{ch2}, $\dim K=\dim T=1$. Let $K_0=g^{-1}(X)$ and
$g_0=g|Z_0$. Because $X$ is $C$-embedded in $\mathbb R^\tau$, it is
easily seen that $C(g_0)(C(X))\subset C(K)|K_0$. Therefore, the map
$g_0$ can be extended to a map $h\colon K\to X$ (recall that $X\in
AE(1)$). Then the compact-valued map $r\colon\mathbb R^\tau\to X$,
defined by $r(y)=h(g^{-1}(y))$, is both lower semi-continuous
(because $g$ is open) and upper semi-continuous (because $g$ is
closed). Moreover, since each fiber $g^{-1}(y)$, $y\in\mathbb
R^\tau$, is a continuum, so are the values of $r$. Finally, observe
that  $r(x)=\{x\}$ for all $x\in X$. Then, according to Theorem
\ref{retract}, there exists a normed weakly additive extender
$u\colon C^*(X)\to C^*(\mathbb R^\tau)$ with compact supports such
that $u$ preserves $\min$ and weakly preserves $\max$. This
completes the proof of the implication $(i)\Rightarrow (ii)$.

The implication $(ii)\Rightarrow (iv)$ follows from the proof of
Theorem \ref{retract}. Indeed, the map $\theta
:Y\rightarrow\mathfrak{R}_{min}(X)_c$, $\theta(y)=\mu_y$, is the
required one.

To prove the last implication $(iv)\Rightarrow (i)$, consider $X$ as
a $C$-embedded subset of some $\mathbb R^\tau$, and let
$\theta\colon\mathbb R^\tau\to\mathfrak{R}_{min}(X)_c$ be a
continuous map with $\theta(x)=\delta_x$, $x\in X$. It was
established in the proof of Theorem \ref{retract} that the equality
$r(y)=S(\theta(y))$, $y\in\mathbb R^\tau$,  defines a continuous
compact-valued retraction from $\mathbb R^\tau$ onto $X$. For every
$y\in\mathbb R^\tau$, let $F(y)$ be the closure in $\mathbb R^\tau$
of the convex hull $conv(r(y))$. Since $r(y)$ is compact, $F(y)$ is
a convex compact subset of $\mathbb R^\tau$. Finally, let
$\varphi(y)=r(F(y))$, $y\in\mathbb R^\tau$. It is easily seen that
$\varphi\colon\mathbb R^\tau\to X$ is upper semi-continuous. Since
$r$ is continuous and compact-valued, each $\varphi (y)$ is a
continuum. Hence, $\varphi$ is an upper semi-continuous
continuum-valued retraction from $\mathbb R^\tau$ onto $X$.
Therefore, by \cite[Theorem 3.9(ii)]{cv}, $X\in AE(1)$.
\end{proof}

\begin{cor}
A space $X$ is an $AE(1)$ if and only if for every $C$-embedding of
$X$ into a space $Y$ there exists an extender $u:C(X)\rightarrow
C(Y)$ with compact supports such that $u$ is normed, weakly
additive, preserves $\min$ and weakly preserves $\max$ $($resp.,
preserves $\max$ and weakly preserves $\min$$)$.
\end{cor}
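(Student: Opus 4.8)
The plan is to reduce the unbounded statement to Theorem \ref{ae} by shuttling between the bounded and unbounded function spaces through the set-valued retraction furnished by Theorem \ref{retract}; throughout, ``$u\colon C(X)\to C(Y)$ has compact supports'' is read via the restriction $u|C^*(X)$, i.e.\ the functionals $\mu_y=u(\cdot)(y)$ restricted to $C^*(X)$ have compact supports in the sense of Section 2.

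\emph{Forward implication.} Suppose $X\in AE(1)$ and $X$ is $C$-embedded in $Y$. By Theorem \ref{ae} together with the ``moreover'' part of Theorem \ref{retract} (or, more directly, from the construction in the proof of $(i)\Rightarrow(ii)$ of Theorem \ref{ae}), there is a continuous compact-valued retraction $r\colon Y\to X$. Define $u\colon C(X)\to C(Y)$ by $u(f)(y)=\inf\{f(x):x\in r(y)\}$ (resp.\ $u(f)(y)=\sup\{f(x):x\in r(y)\}$). Since each $r(y)$ is a non-empty compactum, $u(f)(y)$ is a finite real for every $f\in C(X)$, even for unbounded $f$. Continuity of $u(f)$ is proved exactly as in Theorem \ref{retract}: near a point $y_0$, writing $m=u(f)(y_0)$, the lower bound $m-\varepsilon\le u(f)(y)$ follows from upper semicontinuity of $r$ applied to the open set $\{x:f(x)>m-\varepsilon\}\supset r(y_0)$, and the upper bound $u(f)(y)<m+\varepsilon$ from lower semicontinuity of $r$ applied to $\{x:f(x)<m+\varepsilon\}$, neither step using boundedness of $f$. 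The identities $\inf_{r(y)}(f+c_X)=\inf_{r(y)}f+c$, $\inf_{r(y)}1_X=1$, $\inf_{r(y)}\min\{f,g\}=\min\{\inf_{r(y)}f,\inf_{r(y)}g\}$ and $\inf_{r(y)}\max\{f,c_X\}=\max\{\inf_{r(y)}f,c\}$ show $u$ is normed, weakly additive, preserves $\min$ and weakly preserves $\max$ (dually in the ``resp.'' case), and $r(x)=\{x\}$ makes $u$ an extender. Finally, for $x\in X$ the restriction of $\mu_y$ to $C^*(X)$ satisfies $\mu_y|C^*(X)(f)=\inf\{\beta f(x):x\in r(y)\}$, which by Theorem \ref{functional} has support $r(y)\subset X$; hence $u$ has compact supports.

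\emph{Converse.} Let $u\colon C(X)\to C(Y)$ be an extender with compact supports which is normed, weakly additive, preserves $\min$ and weakly preserves $\max$ (the other case is symmetric). Normedness and weak additivity give $u(0_X)=u(0_X+1_X)-1=0$, and since $u$ preserves $\min$ it is monotone; thus for any $f\in C^*(X)$ with $a\le f\le b$ we get $a=u(a_X)\le u(f)\le u(b_X)=b$, so $u(f)\in C^*(Y)$. Hence $u$ restricts to an extender $C^*(X)\to C^*(Y)$ retaining all the listed properties and compact supports, and Theorem \ref{ae} gives $X\in AE(1)$.

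The only delicate point is that $u(f)$ be well defined and continuous for \emph{unbounded} $f\in C(X)$; this is immediate from compactness of the values of $r$ and the semicontinuity argument already used for Theorem \ref{retract}, so there is no genuine new obstacle beyond the bookkeeping for ``compact supports,'' which is absorbed by identifying the support of a functional on $C(X)$ with that of its restriction to $C^*(X)$.
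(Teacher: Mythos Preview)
Your argument is correct and follows essentially the same route as the paper: produce a continuous compact-valued retraction $r\colon Y\to X$ from the $AE(1)$ hypothesis (via Theorem \ref{ae}/\ref{retract}), define $u(f)(y)=\inf_{r(y)}f$ (resp.\ $\sup$) on all of $C(X)$, and for the converse observe that monotonicity together with normedness and weak additivity forces $u(C^*(X))\subset C^*(Y)$ so that Theorem \ref{ae} applies. The paper's proof is more terse and routes the retraction through an ambient $\mathbb{R}^\tau$, but the substance is the same; your extra verification of continuity of $u(f)$ for unbounded $f$ and your explicit reading of ``compact supports'' via $u|C^*(X)$ only make the argument more self-contained.
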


\begin{proof}
Suppose $X\in AE(1)$. As in the proof of Theorem \ref{ae} we can
assume that $Y$ is a subset of $\mathbb R^\tau$ for some $\tau$ and
there exists a  continuous compact-valued retraction $r\colon\mathbb
R^\tau\to X$. Then $u(f)(y)=\inf\{f(x):x\in r(y)\}$ (resp.,
$u(f)(y)=\sup\{f(x):x\in r(y)\}$) defines the required extender
$u\colon C(X)\to C(\mathbb R^\tau)$. The other direction follows
directly from Theorem \ref{ae} because for any monotone normed and
weakly additive extender $u\colon C(X)\to C(\mathbb R^\tau)$ we have
$u(C^*(X))\subset C^*(\mathbb R^\tau)$.
\end{proof}

\section{Upper and lower semi-continuous retractions}

In this section we describe a connection between upper (resp.,
lower) semi-continuous retractions and functional extenders. Recall
that a function $f\colon X\to\mathbb R$ is called lower (resp.,
upper) semi-continuous if $f^{-1}(a,\infty)$ (resp.,
$f^{-1}(-\infty,a)$) is open in $X$ for every $a\in\mathbb R$. For
any space $X$ we denote by $C^*_{lsc}(X)$ (resp., $C^*_{usc}(X)$)
the set of all bounded lower (resp., upper) semi-continuous
functions on $X$.

The following theorem is an analogue of Theorem \ref{retract}.

\begin{theorem}\label{usretract}
Let $X$ be a subspace of $Y$. Then the following conditions are
equivalent:
\begin{itemize}
\item[(i)] There exists an upper semi-continuous compact-valued map\\ $r\colon Y\to\beta X$ with
$r(x)=\{x\}$ for all $x\in X$;
\item[(ii)] There exists an extender
$u:C^*(X)\rightarrow C^*_{lsc}(Y)$ which is normed, weakly additive,
preserves $\min$ and weakly preserves $\max$;
\item[(iii)] There exists an extender
$u:C^*(X)\rightarrow C^*_{usc}(Y)$ which is normed, weakly additive,
preserves $\max$ and weakly preserves $\min$.
\end{itemize}
Moreover, the extenders from $($ii$)$ and $($iii$)$ have compact
supports iff $r(y)\subset X$ for all $y\in Y$.
\end{theorem}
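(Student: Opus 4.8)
The plan is to mirror the proof of Theorem \ref{retract}, replacing the two-sided (Vietoris) semicontinuity arguments by one-sided ones, and using the descriptions of $\mathfrak{R}_{min}(X)$ and $\mathfrak{R}_{max}(X)$ from Section 2 together with the topologies $\mathfrak{R}_{min}^{lsc}(X)_c$, $\mathfrak{R}_{max}^{usc}(X)_c$ from Proposition \ref{exp{lsc}}.

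First I would prove $(i)\Rightarrow(ii)$ and $(i)\Rightarrow(iii)$. Given an upper semi-continuous compact-valued $r\colon Y\to\beta X$ with $r(x)=\{x\}$ on $X$, set $u(f)(y)=\inf\{\beta f(x):x\in r(y)\}$ (resp.\ $\sup\{\beta f(x):x\in r(y)\}$) for $f\in C^*(X)$. The key point is the semicontinuity class of $u(f)$: for $r$ upper semi-continuous and compact-valued, $y\mapsto\min_{x\in r(y)}\beta f(x)$ is lower semi-continuous and $y\mapsto\max_{x\in r(y)}\beta f(x)$ is upper semi-continuous; these are standard facts about parametric extrema over usc compact-valued maps, which I would verify directly (e.g.\ $\{y:u(f)(y)>a\}=\{y:r(y)\subset\beta f^{-1}(a,\infty)\}$ is open by upper semi-continuity). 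Boundedness of $u(f)$ is clear, $u(f)|X=f$ follows from $r(x)=\{x\}$, and the algebraic properties (normed, weakly additive, preserving $\min$ and weakly preserving $\max$, resp.\ $\max$/$\min$) are checked pointwise exactly as in Theorem \ref{retract}, using that each functional $\mu_y(f)=\inf\{\beta f(x):x\in r(y)\}$ lies in $\mathfrak{R}_{min}(X)$ by Theorem \ref{functional}. If moreover $r(y)\subset X$ for all $y$, then $S(\mu_y)=r(y)\subset X$, so $u$ has compact supports.

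Next I would prove $(ii)\Rightarrow(i)$ and $(iii)\Rightarrow(i)$. Given a normed, weakly additive extender $u\colon C^*(X)\to C^*_{lsc}(Y)$ preserving $\min$ and weakly preserving $\max$, the functionals $\mu_y(f)=u(f)(y)$ satisfy the hypotheses of Corollary 2.7, so $\mu_y\in\mathfrak{R}_{min}(X)$ and $r(y):=S(\mu_y)$ is a non-empty compact subset of $\beta X$ with $\mu_y(f)=\inf\{\beta f(x):x\in r(y)\}$; since $\mu_x=\delta_x$ on $X$, we get $r(x)=\{x\}$. It remains to see that $r\colon Y\to\exp\beta X$ is upper semi-continuous, i.e.\ $\{y:r(y)\subset U\}$ is open for open $U\subset\beta X$. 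For this I would redo the upper semi-continuity half of the proof of Theorem \ref{exp}: pick $h\in C(\beta X)$ with $h(\beta X\setminus U)=-1$ and $h\le 0$; then $r(y)\subset U$ forces $\mu_y(h)>-1/2$... more carefully, the correct test is that the map $y\mapsto\mu_y$ is continuous into $\mathfrak{R}_{min}^{lsc}(X)_c$ because each $u(f)$ is lower semi-continuous, hence for every $f$ and $a$ the set $\{y:\mu_y(f)>a\}$ is open; composing with the homeomorphism $\mathfrak{R}_{min}^{lsc}(X)_c\cong\exp_c^+X$ (resp.\ $\mathfrak{R}_{max}^{usc}(X)_c\cong\exp_c^+X$) from Proposition \ref{exp{lsc}} shows $y\mapsto S(\mu_y)=r(y)$ is continuous into $\exp^+\beta X$, which is exactly upper semi-continuity of $r$ as a set-valued map. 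The $(iii)\Rightarrow(i)$ case is the dual one, using $\nu_X$ and $\mathfrak{R}_{max}^{usc}(X)_c$. When $u$ has compact supports we get $r(y)=S(\mu_y)\subset X$, completing the ``moreover'' part in that direction as well.

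The main obstacle I anticipate is matching the one-sided Vietoris topology on the hyperspace with the one-sided semicontinuity of the extender precisely: one must check that lower semi-continuity of each $u(f)$ is equivalent to continuity of $y\mapsto\mu_y$ into $\mathfrak{R}_{min}^{lsc}(X)_c$ (which by definition has subbasic open sets $\{\mu:\mu(f)>a\}$), and that under the homeomorphism of Proposition \ref{exp{lsc}} this translates into upper semi-continuity of $r$ — the potential confusion being the ``lsc functional $\leftrightarrow$ usc set-valued map'' sign flip, which is why $C^*_{lsc}(Y)$ pairs with $\mathfrak{R}_{min}$ and $\exp_c^+$, and must be tracked carefully. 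Everything else is a routine transcription of the arguments already given for Theorem \ref{retract} and Theorem \ref{exp}, so I would keep the write-up short and refer back to those proofs for the algebraic verifications and for the two semicontinuity computations.
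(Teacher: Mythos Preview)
Your proposal is correct and follows essentially the same route as the paper: the paper also defines $u(f)(y)=\inf\{\beta f(x):x\in r(y)\}$ (resp.\ $\sup$) for $(i)\Rightarrow(ii),(iii)$ and sets $r(y)=S(\mu_y)$ for the converse, checking upper semi-continuity of $r$ directly with a test function (take $f\in C(\beta X)$ with $f|r(y_0)=1$, $f|(\beta X\setminus U)=0$, and use that $u(f)$ is lower semi-continuous) rather than quoting Proposition \ref{exp{lsc}}. One small caveat: Proposition \ref{exp{lsc}} as stated concerns $\mathfrak{R}_{min}^{lsc}(X)_c\cong\exp_c^+X$, so in the non-compact-support case your $\mu_y$ need not lie in $\mathfrak{R}_{min}(X)_c$; you should apply the proposition with $\beta X$ in place of $X$ (where every support is automatically compact), or simply do the direct one-line check as the paper does.
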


\begin{proof}
Let $r\colon Y\to\beta X$ be a compact-valued upper semi-continuous
map with $r(x)=\{x\}$ for all $x\in X$. Then, for every $f\in
C^*(X)$, the equality $u(f)(y)=\inf\{\beta f(x):x\in r(y)\}$, $y\in
Y$, defines a bounded function $u(f)\colon Y\to\mathbb R$.
Obviously, $u(f)(x)=f(x)$ provided $x\in X$. So, $u$ is an extender,
and it is easily seen that $u$ is normed, weakly additive, preserves
$\min$ and weakly preserves $\max$. Using that $r$ is upper
semi-continuous, one can show that $u(f)\in C^*_{lsc}(Y)$. Indeed,
suppose $u(f)(y_0)>a$ for some $y_0\in Y$ and $a\in\mathbb R$. Then
$r(y_0)\subset (a,\infty)$ and there exists a neighborhood
$O(y_0)\subset Y$ of $y_0$ such that $r(y)\subset (a,\infty)$ for
all $y\in O(y_0)$. Hence, $u(f)(y)>a$, $y\in O(y_0)$.  Therefore,
$(i)$ implies $(ii)$. The implication $(i)\Rightarrow (iii)$ is
similar, we define the extender $u$ by $u(f)(y)=\sup\{\beta
f(x):x\in r(y)\}$, where  $f\in C^*(X)$ and  $y\in Y$. In this case
$u(f)\in C^*_{usc}(Y)$.

Suppose $r\colon Y\to X$ is an upper semi-continuous compact-valued
retraction.  Then each of the extenders $u$ defined above has
compact supports. Indeed, by Theorem 2.9, the support of any
functional $\mu_y$, $\mu_y(f)=u(f)(y)$, is the set $r(y)$.

The implication $(ii)\Rightarrow (i)$ follows from the observation
that if $u:C^*(X)\rightarrow C^*_{lsc}(Y)$ is normed, weakly
additive extender  which  preserves $\min$  and weakly preserves
$\max$, then the functionals $\mu_y$, $y\in Y$, belong to
$\mathfrak{R}_{min}(X)$. So, by Theorem 2.9, $S(\mu_y)$ is a
non-empty compact subset of $\beta X$. Moreover, $S(\mu_x)=\{x\}$
for all $x\in X$ because $u$ is an extender. Hence, the set-valued
map $r\colon Y\to\beta X$, $r(y)=S(\mu_y)$, is  compact-valued with
$r(x)=\{x\}$ for $x\in X$. Let us show that $r$ is upper
semi-continuous. Suppose $r(y_0)\subset U$ for some $y_0\in Y$ and
open $U\subset\beta X$. Take a function $f\in C(\beta X)$ with
$f(x)=1$ for all $x\in r(y_0)$ and $f(\beta X\backslash U)=0$. Since
$u(f)$ is lower semi-continuous, $y_0$ has a neighborhood $V(y_0)$
such that $u(f)(y)>1/2$ for all $y\in V(y_0)$. This implies
$r(y)\subset U$, $y\in V(y_0)$. Indeed, otherwise Corollary 2.7
would yield $\mu_y(f)=u(f)(y)=0$ for some $y\in V(y_0)$. Obviously,
$r(y)\subset X$ when $u$ has compact supports. Similar arguments
provide the proof of $(iii)\Rightarrow (i)$.
\end{proof}

We can establish now a characterization of $AE(0)$-spaces in terms
of normed weakly additive extenders with compact supports preserving
$\min$ (resp., $\max$) and weakly preserving $\max$ (resp., $\min$).

\begin{cor}\label{ae(0)}
The following conditions are equivalent for any space $X$:
\begin{itemize}
\item[(i)] $X\in AE(0)$;
\item[(ii)] For every $C$-embedding of $X$ in a space $Y$ there exists a normed
weakly additive extender $u:C^*(X)\rightarrow C^*_{lsc}(Y)$ with
compact supports which preserves $\min$ and weakly preserves $\max$;
\item[(iii)] For every $C$-embedding of $X$ in a space $Y$ there exists a normed
weakly additive extender $u:C^*(X)\rightarrow C^*_{usc}(Y)$ with
compact supports which preserves $\max$ and weakly preserves $\min$.
\end{itemize}
\end{cor}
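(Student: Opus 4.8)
The plan is to mimic the proof of Theorem \ref{ae}, reducing the statement to Theorem \ref{usretract} together with the known characterization of $AE(0)$-spaces via (single-valued) $0$-dimensional resolutions. First I would prove $(i)\Rightarrow (ii)$: assuming $X\in AE(0)$ and $X$ is $C$-embedded in $Y$, realize $Y$ as a $C$-embedded subset of some $\mathbb R^\tau$ and hence reduce to $Y=\mathbb R^\tau$. Embed $\mathbb R^\tau$ densely in $\mathbb I^\tau$ and take a $0$-dimensional open (hence also closed, by compactness of the domain) surjection $g\colon T\to\mathbb I^\tau$ from a $0$-dimensional $AE(0)$-compactum $T$; such a $g$ exists classically. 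As in the proof of Theorem \ref{ae}, $K=g^{-1}(\mathbb R^\tau)$ is dense in $T$, so $\dim K=0$, and $K_0=g^{-1}(X)$ with $g_0=g|K_0$ satisfies $C(g_0)(C(X))\subset C(K)|K_0$ because $X$ is $C$-embedded in $\mathbb R^\tau$. Since $X\in AE(0)$, $g_0$ extends to $h\colon K\to X$, and the compact-valued map $r\colon\mathbb R^\tau\to X$, $r(y)=h(g^{-1}(y))$, is upper semi-continuous (because $g$ is closed) with $r(x)=\{x\}$ for $x\in X$. By Theorem \ref{usretract} there is a normed weakly additive extender $u\colon C^*(X)\to C^*_{lsc}(\mathbb R^\tau)$ with compact supports preserving $\min$ and weakly preserving $\max$, which restricts to the required extender over $Y$; similarly one gets the version with $C^*_{usc}$, $\max$ and weakly $\min$, giving $(i)\Rightarrow (iii)$ as well.

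Next I would prove $(ii)\Rightarrow (i)$ (and symmetrically $(iii)\Rightarrow(i)$). Embed $X$ as a $C$-embedded subset of some $\mathbb R^\tau$ and apply the hypothesis to get an extender $u\colon C^*(X)\to C^*_{lsc}(\mathbb R^\tau)$ with compact supports, normed, weakly additive, preserving $\min$ and weakly preserving $\max$. By Theorem \ref{usretract} this produces an upper semi-continuous compact-valued retraction $r\colon\mathbb R^\tau\to X$, namely $r(y)=S(\mu_y)$ where $\mu_y(f)=u(f)(y)$. Unlike the $AE(1)$ case, here one cannot pass to convex hulls (that produced continua, which is exactly what $AE(1)$ needs); instead one invokes the characterization of $AE(0)$-spaces in terms of upper semi-continuous compact-valued (equivalently, $0$-dimensional) retractions from $\mathbb R^\tau$-like spaces — this is the $AE(0)$ analogue of \cite[Theorem 3.9]{cv} and is precisely the content of the second author's characterization \cite{v3} referenced in the introduction. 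Applying it to $r$ yields $X\in AE(0)$.

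The equivalence $(ii)\Leftrightarrow(iii)$ is routine and identical to the argument in Theorem \ref{ae}: if $u\colon C^*(X)\to C^*_{usc}(Y)$ is a normed weakly additive extender with compact supports preserving $\max$ and weakly preserving $\min$, then $v(f)=-u(-f)$ maps $C^*(X)$ into $C^*_{lsc}(Y)$ (negation swaps upper and lower semi-continuity), is still an extender, is normed and weakly additive, and now preserves $\min$ and weakly preserves $\max$; moreover $v$ has the same supports as $u$, so it has compact supports. Thus it suffices to establish $(i)\Leftrightarrow(ii)$.

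The main obstacle is the backward direction: one must be careful that the reference characterization of $AE(0)$ is stated for (or immediately implies a statement for) upper semi-continuous \emph{compact-valued} retractions $\mathbb R^\tau\to X$, rather than single-valued ones, so that the map $r$ produced by Theorem \ref{usretract} can be fed into it directly. If the available form of \cite{v3} is phrased via $0$-dimensional maps, a short lemma is needed translating "$X$ is a $u.s.c.$ compact-valued retract of $\mathbb R^\tau$" into "$X$ is a retract of a $0$-dimensional space along a perfect $0$-dimensional map," by taking the graph of $r$ or a suitable fibered resolution. The forward direction is essentially a verbatim transcription of the $AE(1)$ argument with "$\dim\le 1$" replaced by "$\dim\le 0$" and "continuum-valued" replaced by "compact-valued", so no genuine difficulty is expected there; one only has to note that no convexification step is performed, since upper semi-continuity of $r$ is all that is required.
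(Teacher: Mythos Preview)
Your argument is correct, but it is considerably more elaborate than what the paper does. The paper's proof is a single sentence: it quotes from \cite{v3} the characterization ``$X\in AE(0)$ if and only if for any $C$-embedding of $X$ in a space $Y$ there exists a compact-valued upper semi-continuous retraction $r\colon Y\to X$'' and observes that, by Theorem~\ref{usretract} (with compact supports), this is exactly the equivalence $(i)\Leftrightarrow(ii)\Leftrightarrow(iii)$. No embedding into $\mathbb R^\tau$, no $0$-dimensional resolution $g\colon T\to\mathbb I^\tau$, no reconstruction of the retraction is needed: all of that work is already packaged inside \cite{v3}.

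What you do differently is essentially to re-prove the forward half of \cite{v3} by hand (the resolution argument) and then, for the backward half, specialize to a single embedding into $\mathbb R^\tau$ before invoking \cite{v3}. This is fine, and your worry about the exact formulation of \cite{v3} is unnecessary here: the paper states it precisely in the ``for every $C$-embedding'' form, so Theorem~\ref{usretract} plugs in directly, embedding by embedding. Your route has the virtue of being self-contained about how the retraction arises in the $AE(0)$ case, but at the cost of reproducing an argument that the paper is content to cite.
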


\begin{proof}
The proof follows from Theorem 4.1 and the following
characterization of $AE(0)$-spaces \cite{v3}: $X\in AE(0)$ if and
only if for any $C$-embedding of $X$ in a space $Y$ there exists a
compact-valued upper semi-continuous retraction $r\colon Y\to X$.
\end{proof}

Next theorem shows that conditions (ii) and (iii) from Theorem
\ref{usretract} can be weakened.

\begin{theorem}\label{m}
Let $X$ be a subspace of $Y$. Then the following conditions are
equivalent:
\begin{itemize}
\item[(i)] There exists an upper semi-continuous compact-valued map\\ $r\colon Y\to\beta X$ with
$r(x)=\{x\}$ for all $x\in X$;
\item[(ii)] There exists an extender
$u:C^*(X)\rightarrow C^*_{lsc}(Y)$ preserving $\min$ with
$u(1_X)=1_Y$;
\item[(iii)] There exists an extender
$u:C^*(X)\rightarrow C^*_{usc}(Y)$ preserving $\max$ with
$u(1_X)=1_Y$.
\end{itemize}
\end{theorem}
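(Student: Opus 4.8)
The plan is to prove $(i)\Rightarrow(ii)$, $(i)\Rightarrow(iii)$, $(ii)\Rightarrow(i)$ and $(iii)\Rightarrow(i)$. The first two implications require no new work: the extenders $u(f)(y)=\inf\{\beta f(x):x\in r(y)\}$ and $u(f)(y)=\sup\{\beta f(x):x\in r(y)\}$ built in the proof of Theorem \ref{usretract} are, in particular, extenders into $C^*_{lsc}(Y)$, resp.\ $C^*_{usc}(Y)$, that preserve $\min$, resp.\ $\max$, and satisfy $u(1_X)=1_Y$. The implication $(iii)\Rightarrow(i)$ will be handled dually to $(ii)\Rightarrow(i)$, interchanging $\min$ with $\max$, $\inf$ with $\sup$, and lower with upper semi-continuity; note that, weak additivity no longer being available, one cannot simply pass from $(iii)$ to $(ii)$ via $f\mapsto -u(-f)$, since that map need not send $1_X$ to $1_Y$. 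So the whole content lies in the implication $(ii)\Rightarrow(i)$.

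For $(ii)\Rightarrow(i)$, let $u\colon C^*(X)\to C^*_{lsc}(Y)$ be an extender preserving $\min$ with $u(1_X)=1_Y$, and write $\mu_y(f)=u(f)(y)$, identifying $C^*(X)$ with $C(\beta X)$. Each $\mu_y$ preserves $\min$, hence is monotone, and $\mu_y(1_X)=1$; since $f=\min\{f,1_X\}$ when $f\le 1_X$, we get $\mu_y(f)\le 1$ for $0_X\le f\le 1_X$. I would take for $r(y)$ a support of $\mu_y$ — for instance the complement in $\beta X$ of the union of all open $U\subseteq\beta X$ such that $\mu_y(f)=1$ for every $f\in C(\beta X)$ with $0\le f\le 1$ and $f|_{\beta X\setminus U}\equiv 1$. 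Then $r(y)$ is a closed, hence compact, subset of $\beta X$, and $r(x)=\{x\}$ for every $x\in X$ because $\mu_x=\delta_x$. Upper semi-continuity of $y\mapsto r(y)$ would be obtained as in the proof of Theorem \ref{usretract}: if $r(y_0)\subseteq U$, choose $f\in C(\beta X)$ with $0\le f\le 1$, $f|_{r(y_0)}\equiv 1$ and $f|_{\beta X\setminus U}\equiv 0$; one verifies $\mu_{y_0}(f)=1$, and since $u(f)$ is lower semi-continuous, $u(f)>1/2$ on some neighbourhood of $y_0$, which forces $r(y)\subseteq U$ there.

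The step with no counterpart in Theorem \ref{usretract} — and, I expect, the main obstacle — is verifying $r(y)\neq\varnothing$. The absence of weak additivity prevents us from invoking the support description of Section 2, so $\mu_y$ need not be of the form $f\mapsto\inf\{\beta f(x):x\in r(y)\}$, and a normed $\min$-preserving functional can be genuinely more general. What rescues the argument is the extender hypothesis together with lower semi-continuity: if $r(y_0)=\varnothing$, a finite subcover $U_1,\dots,U_k$ of $\beta X$ by members of the relevant family, together with $f_i$ ($i\le k$) satisfying $0\le f_i\le 1$, $f_i|_{\beta X\setminus U_i}\equiv 1$ and $f_i|_{\overline{V_i}}\equiv 0$ for an open shrinking $\{V_i\}$ of $\{U_i\}$, produces $f=\min_i f_i\equiv 0_X$ with $\mu_{y_0}(f)=\min_i\mu_{y_0}(f_i)=1$; thus $u(0_X)(y_0)=1$, contradicting the lower semi-continuity of $u(0_X)$ (identically $0$ on $X$) unless $y_0\in Y\setminus\overline X$. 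Hence $r(y)\neq\varnothing$ for all $y\in\overline X$ — where, moreover, lower semi-continuity forces $\mu_y(c_X)\le c$, which keeps $r(y)$ from degenerating — while on $Y\setminus\overline X$, an open set disjoint from $X$, the values of $r$ are unconstrained by the extender condition and can be reassigned so that the resulting set-valued map is compact-valued and upper semi-continuous on all of $Y$. Matching the two descriptions of $r$ along $\overline X\setminus X$, and making the verification $\mu_{y_0}(f)=1$ in the upper semi-continuity step precise in the absence of linearity, are the technical points I expect to require the most care.
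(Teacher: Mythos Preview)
Your outline diverges from the paper's, and the step you flag as ``requiring the most care'' is in fact where the argument breaks. Even if you could verify $\mu_{y_0}(f)=1$, the inference ``$\mu_y(f)>1/2$ forces $r(y)\subseteq U$'' that you borrow from Theorem~\ref{usretract} relied there on Corollary~2.7, i.e.\ on the representation $\mu_y(f)=\inf\{f(x):x\in S(\mu_y)\}$, which in turn needs weak additivity. With only $\min$-preservation and $\mu_y(1_X)=1$ there is no such representation: the functional $\mu(f)=\min\{f(a),0\}+1$ is $\min$-preserving with $\mu(1)=1$, yet $\mu(f)=1$ for \emph{every} $f\in[0,1]$, so your support set is empty and knowing $\mu_y(f)>1/2$ for one test function says nothing about $r(y)$. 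Your non-emptiness argument correctly rules out such $\mu_y$ for $y\in\overline X$, but it does not supply the missing implication for upper semi-continuity. The patching on $Y\setminus\overline X$ is a second genuine obstruction: a net in $Y\setminus\overline X$ (where you set $r\equiv\beta X$) may converge to a point $y_0\in\overline X$ with $r(y_0)$ properly contained in $\beta X$, destroying upper semi-continuity.

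The paper avoids both difficulties by abandoning the support picture altogether and constructing instead an extension operator on open sets. For the case~(iii) one puts, for each open $U\subset X$,
\[
\mathrm e(U)=\bigcup\bigl\{u(h)^{-1}\bigl((-\infty,1)\bigr):h\in C_U\bigr\},
\qquad C_U=\{h\in C^*(X):h\le 1,\ h|_{X\setminus U}\equiv 1\},
\]
an open subset of $Y$ with $\mathrm e(U)\cap X=U$. The single structural lemma is $\mathrm e(U\cap V)=\mathrm e(U)\cap\mathrm e(V)$, proved by applying $\max$-preservation to $\max\{h_U,h_V\}\in C_{U\cap V}$. One then sets $r(y)=\bigcap\{\overline U^{\beta X}:y\in\mathrm e(U)\}$, with $r(y)=\beta X$ when the index family is empty. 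Non-emptiness is automatic from the finite-intersection property in the compactum $\beta X$, and upper semi-continuity is immediate because each $\mathrm e(U)$ is open and $r(y)\subset\overline U^{\beta X}$ throughout $\mathrm e(U)$. No splitting into $\overline X$ and its complement is needed; the definition is uniform in $y$. Case~(ii) is strictly dual: take $C_U=\{h\ge 1:h|_{X\setminus U}\equiv 1\}$ and $\mathrm e(U)=\bigcup_{h\in C_U}u(h)^{-1}\bigl((1,\infty)\bigr)$.
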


\begin{proof}
According to Theorem 4.1, it suffices to show the implications
$(ii)\Rightarrow (i)$ and $(iii)\Rightarrow (i)$.  We are going to
prove first $(iii)\Rightarrow (i)$. Suppose $u:C^*(X)\rightarrow
C^*_{usc}(Y)$ is an extender preserving $\max$ and $u(1_X)=1_Y$. For
every open set $U\subset X$ let
$$\mathrm{e}(U)=\bigcup\{u(h)^{-1}((-\infty,1)):h\in C_U\},$$
where $C_U$ is the set of all $h\in C^*(X)$ such that $h(X)\subset
(-\infty,1]$ and $X\backslash U\subset h^{-1}(1)$. Since $u$
preserves $\max$, it is monotone. Hence, $u(h)\leq u(1_X)=1_Y$ for
all $h\in C_U$. Because each $u(h)$ is upper semi-continuous,
$u(h)^{-1}((-\infty,1))$ is open in $Y$, so is the set
$\mathrm{e}(U)$. Using that $u$ is an extender, one can show that
$\mathrm{e}(U)\cap X=U$. Moreover, if $U\subset V$, then $C_U\subset
C_V$ and we have $\mathrm{e}(U)\subset\mathrm{e}(V)$.

We claim that $\mathrm{e}(U\cap V)=\mathrm{e}(U)\cap\mathrm{e}(V)$
for any two open sets $U,V\subset X$. Indeed, the inclusion
$\mathrm{e}(U\cap V)\subset\mathrm{e}(U)\cap\mathrm{e}(V)$ follows
from monotonicity of the operator $\mathrm{e}$. To prove the other
inclusion, let $y\in \mathrm{e}(U)\cap\mathrm{e}(V)$. Then there
exist $h_U\in C_U$ and $h_V\in C_V$ with $u(h_U)(y)<1$ and
$u(h_V)(y)<1$. Obviously, $h_U^{-1}((-\infty,1))\cap
h_V^{-1}((-\infty,1))=\varnothing$ implies $\max\{h_U,h_V\}=1_X$.
So, $u(\max\{h_U,h_V\})(y)=\max\{u(h_U)(y),u(h_V)(y)\}=1$, which is
a contradiction. Therefore, $h_U^{-1}((-\infty,1))\cap
h_V^{-1}((-\infty,1))\neq\varnothing$. On the other hand,
$g=\max\{h_U,h_V\}$ belongs to $C_{U\cap V}$ and $y\in
g^{-1}((-\infty,1))$. Thus, $y\in\mathrm{e}(U\cap V)$.

Now, we define the set-valued map $r\colon Y\to\beta X$ by
$$r(y)=\bigcap\{\overline{U}^{\beta
X}:y\in\mathrm{e}(U)\}\hbox{~}\mbox{if}\hbox{~}
y\in\bigcup\{\mathrm{e}(U):U\in\mathcal T_X\}$$ and $$r(y)=\beta
X\hbox{~}\mbox{if}\hbox{~}
y\not\in\bigcup\{\mathrm{e}(U):U\in\mathcal T_X\}.$$ Using that
$\bigcap_{i=1}^{i=k}\mathrm{e}(U_i)=\mathrm{e}(\bigcap_{i=1}^{i=k}U_i)$
for any finitely many open sets $U_i\subset X$, one can show that
$r$ is an upper semi-continuous map with non-empty values. Since
$\mathrm{e}(U)\cap X=U$, $U\in\mathcal T_X$, we have $r(x)=\{x\}$
for all $x\in X$.

The proof of $(ii)\Rightarrow (i)$ is similar. The only difference
is the definition of the operator $\mathrm{e}$. Now we define
$$\mathrm{e}(U)=\bigcup\{u(h)^{-1}((1,\infty)):h\in C_U\},$$ where
$C_U$ is the set of all $h\in C^*(X)$ such that $h(X)\subset
[1,\infty)$ and $X\backslash U\subset h^{-1}(1)$.
\end{proof}

Next corollary follows from Theorem 4.3 and Dranishnikov's
characterization \cite{dr} of compact $AE(0)$-spaces as upper
semi-continuous retracts of Tychonoff cubes.

\begin{cor}\label{cptae(0)}
The following conditions are equivalent for a compact space $X$:
\begin{itemize}
\item[(i)] $X\in AE(0)$;
\item[(ii)] For every embedding of $X$ in a space $Y$ there exists an
extender $u:C^*(X)\rightarrow C^*_{lsc}(Y)$ which preserves $\min$
and $u(1_X)=1_Y$;
\item[(iii)] For every embedding of $X$ in a space $Y$ there exists an
extender $u:C^*(X)\rightarrow C^*_{usc}(Y)$  which preserves $\max$
and $u(1_X)=1_Y$.
\end{itemize}
\end{cor}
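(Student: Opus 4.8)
The plan is to derive \ref{cptae(0)} by combining Theorem \ref{m} with Dranishnikov's theorem that a compactum $X$ lies in $AE(0)$ if and only if whenever $X$ is embedded in a Tychonoff cube $\mathbb I^\tau$ there is an upper semi-continuous compact-valued retraction $r\colon\mathbb I^\tau\to X$. The first step is to observe that for a \emph{compact} $X$ the hypothesis ``$C$-embedding'' can be replaced by plain ``embedding'': any embedding of a compactum $X$ into a space $Y$ is automatically a $C$-embedding (being a closed embedding into a Tychonoff space with $X$ compact), and moreover $\beta X = X$, so the map $r\colon Y\to\beta X$ of Theorem \ref{m}(i) is already a map into $X$ itself. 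Thus for compact $X$, condition (i) of Theorem \ref{m} reads: there is an upper semi-continuous compact-valued retraction $r\colon Y\to X$.

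Next I would prove $(i)\Rightarrow(ii)$ and $(i)\Rightarrow(iii)$ of \ref{cptae(0)}. Assume $X\in AE(0)$ and let $X$ be embedded in an arbitrary space $Y$. Embed $Y$ (hence $X$) into a Tychonoff cube $\mathbb I^\tau$; by Dranishnikov's characterization there is an upper semi-continuous compact-valued retraction $\rho\colon\mathbb I^\tau\to X$. Restricting $\rho$ to $Y$ gives an upper semi-continuous compact-valued retraction $r=\rho|Y\colon Y\to X$ (restriction of an u.s.c. map is u.s.c., $r(x)=\{x\}$ for $x\in X$, and values stay in $X$). This is exactly condition (i) of Theorem \ref{m} for the pair $X\subset Y$, with the extra feature $r(y)\subset X$ for all $y$. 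Applying Theorem \ref{m} (together with Theorem \ref{usretract}, or directly the construction in its proof, $u(f)(y)=\inf\{f(x):x\in r(y)\}$ resp.\ $\sup$), we obtain the desired extender $u\colon C^*(X)\to C^*_{lsc}(Y)$ (resp.\ $C^*_{usc}(Y)$) preserving $\min$ (resp.\ $\max$) with $u(1_X)=1_Y$.

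For the converse directions $(ii)\Rightarrow(i)$ and $(iii)\Rightarrow(i)$, suppose (ii) holds; embed $X$ in a Tychonoff cube $Y=\mathbb I^\tau$ (since $X$ is compact this is an honest embedding) and take the extender $u\colon C^*(X)\to C^*_{lsc}(\mathbb I^\tau)$ preserving $\min$ with $u(1_X)=1_Y$. By Theorem \ref{m}, $(ii)\Rightarrow(i)$ produces an upper semi-continuous compact-valued map $r\colon\mathbb I^\tau\to\beta X = X$ with $r(x)=\{x\}$ for all $x\in X$; that is, $r$ is an upper semi-continuous compact-valued retraction of the cube $\mathbb I^\tau$ onto $X$. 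Dranishnikov's characterization then gives $X\in AE(0)$. The argument from (iii) is identical, using the $C^*_{usc}$ version of Theorem \ref{m}. The main point to be careful about — the only place where compactness is genuinely used — is the two small reductions: that every embedding of a compactum is a $C$-embedding (so the ``$C$-embedding'' hypotheses of Theorem \ref{m} and of $AE(0)$ coincide with plain embeddings), and that $\beta X = X$ forces the codomain $\beta X$ in Theorem \ref{m}(i) to be $X$ itself, matching Dranishnikov's retraction statement verbatim. Everything else is a direct citation of Theorem \ref{m} and \cite{dr}.
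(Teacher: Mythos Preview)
Your proof is correct and follows essentially the same route as the paper, which simply says the corollary ``follows from Theorem~\ref{m} and Dranishnikov's characterization \cite{dr} of compact $AE(0)$-spaces as upper semi-continuous retracts of Tychonoff cubes.'' One minor slip: Theorem~\ref{m} has no $C$-embedding hypothesis (it only assumes $X\subset Y$), so your remark about reducing $C$-embedding to plain embedding for that theorem is unnecessary --- though the observation $\beta X = X$ is indeed the key point, and your argument goes through unchanged.
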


Observe that conditions $(ii)$ and $(iii)$ are equivalent. Indeed,
if $u:C^*(X)\rightarrow C^*_{lsc}(Y)$ is an extender preserving
$\min$ and $u(1_X)=1_Y$, then the formula $v(h)=-u(-h)$ defines an
extender $v:C^*(X)\rightarrow C^*_{usc}(Y)$ which preserves $\max$
and $v(1_X)=1_Y$. Similarly, condition $(iii)$ implies $(ii)$.

Concerning lower semi-continuous retractions, one can establish the
following analogue of Theorem 4.1.

\begin{theorem}\label{lsretract}
Let $X$ be a subspace of $Y$. Then the following conditions are
equivalent:
\begin{itemize}
\item[(i)] There exists a lower semi-continuous compact-valued map\\ $r\colon Y\to\beta X$ with
$r(x)=\{x\}$ for all $x\in X$;
\item[(ii)] There exists an extender
$u:C^*(X)\rightarrow C^*_{usc}(Y)$ which is normed, weakly additive,
preserves $\min$ and weakly preserves $\max$;
\item[(iii)] There exists an extender
$u:C^*(X)\rightarrow C^*_{lsc}(Y)$ which is normed, weakly additive,
preserves $\max$ and weakly preserves $\min$.
\end{itemize}
Moreover, the extenders from $($ii$)$ and $($iii$)$ have compact
supports iff $r(y)\subset X$ for all $y\in Y$.
\end{theorem}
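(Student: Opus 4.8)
The plan is to run the proof of Theorem~\ref{usretract} essentially verbatim, interchanging throughout the roles of lower and upper semi-continuity. The one genuinely new point of bookkeeping is that over a \emph{lower} semi-continuous compact-valued map the $\inf$-formula produces an \emph{upper} semi-continuous function (and the $\sup$-formula a lower semi-continuous one), which is precisely the swap appearing in the statement relative to Theorem~\ref{usretract}. By Theorem~\ref{usretract} it is enough to prove $(i)\Leftrightarrow(ii)$ and $(i)\Leftrightarrow(iii)$, together with the clause on compact supports.

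For $(i)\Rightarrow(ii)$ and $(i)\Rightarrow(iii)$, start from a lower semi-continuous compact-valued $r\colon Y\to\beta X$ with $r(x)=\{x\}$ and put $u(f)(y)=\inf\{\beta f(x):x\in r(y)\}$ in case $(ii)$ and $u(f)(y)=\sup\{\beta f(x):x\in r(y)\}$ in case $(iii)$. Boundedness, the extender property $u(f)|X=f$, and the algebraic properties (normed, weakly additive, preserving $\min$ and weakly preserving $\max$; respectively preserving $\max$ and weakly preserving $\min$) are checked exactly as in Theorems~\ref{retract} and~\ref{usretract}. What differs is the semicontinuity class of $u(f)$. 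In case $(ii)$, if $u(f)(y_0)<a$ then $\beta f(x_0)<a$ for some $x_0\in r(y_0)$, so $r(y_0)$ meets the open set $\{x\in\beta X:\beta f(x)<a\}$; lower semi-continuity of $r$ gives a neighbourhood of $y_0$ on which $r$ still meets this set, hence on which $u(f)<a$, so $u(f)\in C^*_{usc}(Y)$. In case $(iii)$ the condition $u(f)(y_0)>a$ similarly localizes, since $\{\beta f>a\}$ is open, so $u(f)\in C^*_{lsc}(Y)$.

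For $(ii)\Rightarrow(i)$, the functionals $\mu_y$, $\mu_y(f)=u(f)(y)$, lie in $\mathfrak{R}_{min}(X)$, so by Theorem~\ref{functional} each $S(\mu_y)$ is a non-empty compact subset of $\beta X$ with $\mu_y(f)=\inf\{\beta f(x):x\in S(\mu_y)\}$; set $r(y)=S(\mu_y)$, and note $\mu_x=\delta_x$ gives $r(x)=\{x\}$. To see $r$ is lower semi-continuous, suppose $r(y_0)\cap U\neq\varnothing$ for open $U\subset\beta X$; choose $x_0$ in this intersection and $f\in C(\beta X)$ with $-1\le f\le 0$, $f(x_0)=-1$ and $f(\beta X\setminus U)=0$. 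Then $u(f)(y_0)=\mu_{y_0}(f)\le f(x_0)=-1<0$, and since $u(f)$ is upper semi-continuous there is a neighbourhood $V(y_0)$ with $u(f)<0$ on $V(y_0)$. For $y\in V(y_0)$ one must have $r(y)\cap U\neq\varnothing$: otherwise $r(y)\subset\beta X\setminus U$, on which $f\equiv0$, forcing $\mu_y(f)=0$. Hence $r$ is lower semi-continuous. The implication $(iii)\Rightarrow(i)$ is symmetric: either repeat the argument with $\mathfrak{R}_{max}(X)$ and Proposition~2.8, using a test function $0\le f\le 1$ with $f(x_0)=1$ and $f(\beta X\setminus U)=0$; or reduce to $(ii)$ via the involution $v(f)=-u(-f)$, which turns a normed weakly additive extender $u\colon C^*(X)\to C^*_{lsc}(Y)$ preserving $\max$ and weakly preserving $\min$ into one $v\colon C^*(X)\to C^*_{usc}(Y)$ preserving $\min$ and weakly preserving $\max$, and preserves compactness of supports.

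For the added clause: in the correspondence $r\leftrightarrow u$ above one always has $S(\mu_y)=r(y)$ by Theorem~\ref{functional}, so $u$ has compact supports precisely when $r(y)\subset X$ for every $y\in Y$. I expect no serious obstacle in carrying this out; the only place demanding care is the $\inf$/$\sup$-versus-usc/lsc correspondence flagged at the outset, together with checking that the involution $v(f)=-u(-f)$ does interchange the two hypotheses (including the compact-support condition) as claimed.
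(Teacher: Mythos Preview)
Your proposal is correct and follows precisely the approach the paper intends: the paper gives no explicit proof of Theorem~\ref{lsretract}, stating only that it is an analogue of Theorem~\ref{usretract}, and your argument carries out exactly that analogy with the expected swap of the $\inf$/$\sup$ formulas against the usc/lsc target classes. The use of the involution $v(f)=-u(-f)$ to reduce $(iii)$ to $(ii)$ is a clean shortcut consistent with the paper's repeated use of this device.
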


\section{Concluding remarks}

Considering extenders which preserve both $\max$ and $\min$, we have
the following proposition:

\begin{proposition}
Let $X$ be a subspace of $Y$. Then each of the following two
conditions implies the existence of a neighborhood $G$ of $X$ in $Y$
and an upper semi-continuous map $r\colon G\to\beta X$ with compact
connected values such that $r(x)=\{x\}$ for all $x\in X$:
\begin{itemize}
\item[(i)] There exists an extender
$u:C^*(X)\rightarrow C^*_{lsc}(Y)$ with $u(1_X)=1_Y$ such that $u$
preserves both $\max$ and $\min$;
\item[(ii)] There exists an extender
$u:C^*(X)\rightarrow C^*_{usc}(Y)$ with $u(1_X)=1_Y$ such that $u$
preserves both $\max$ and $\min$.
\end{itemize}
\end{proposition}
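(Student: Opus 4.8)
The plan is to bootstrap from Theorem \ref{m}: one of its two implications already produces an upper semi-continuous compact-valued map into $\beta X$ fixing $X$ pointwise, and the additional hypothesis here — that $u$ preserves \emph{both} $\max$ and $\min$ — will be spent only on making the values connected, at the cost of shrinking $Y$ to a neighbourhood of $X$. I describe the argument for condition (i); condition (ii) is entirely symmetric, interchanging $\max\leftrightarrow\min$, the rays $(1,\infty)\leftrightarrow(-\infty,1)$, the half-lines $[1,\infty)\leftrightarrow(-\infty,1]$, and using the operator $\mathrm{e}$ from the $(iii)\Rightarrow(i)$ half of the proof of Theorem \ref{m}.

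So let $u\colon C^*(X)\to C^*_{lsc}(Y)$ be an extender preserving $\max$ and $\min$ with $u(1_X)=1_Y$. In particular $u$ preserves $\min$, so by the implication $(ii)\Rightarrow(i)$ in the proof of Theorem \ref{m} we have, for each open $U\subset X$, an open set $\mathrm{e}(U)=\bigcup\{u(h)^{-1}((1,\infty)):h\in C_U\}$, where $C_U=\{h\in C^*(X):h(X)\subset[1,\infty),\ X\setminus U\subset h^{-1}(1)\}$, with $\mathrm{e}(U)\cap X=U$, $\mathrm{e}(U)\subset\mathrm{e}(V)$ for $U\subset V$ and $\mathrm{e}(U\cap V)=\mathrm{e}(U)\cap\mathrm{e}(V)$; and the map $r(y)=\bigcap\{\overline{U}^{\beta X}:y\in\mathrm{e}(U)\}$ for $y$ in some $\mathrm{e}(U)$, $r(y)=\beta X$ otherwise, is upper semi-continuous, compact-valued, with $r(x)=\{x\}$ on $X$. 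I put $G=\bigcup\{\mathrm{e}(U):U\in\mathcal T_X\}$. Then $G$ is open, and since $\mathrm{e}(X)\cap X=X$ we get $X\subset\mathrm{e}(X)\subset G$, so $G$ is a neighbourhood of $X$; and $r|_G$ is still upper semi-continuous, compact-valued, with $r(x)=\{x\}$ on $X$. (Passing to $G$ is genuinely necessary: off $G$ the value $r(y)=\beta X$ can be highly disconnected.) It remains to show $r(y)$ is connected for each $y\in G$.

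For this I would fix $y\in G$ and suppose, towards a contradiction, that $r(y)=F_1\cup F_2$ with $F_1,F_2$ non-empty, closed and disjoint. Using normality of $\beta X$, pick open sets $W_1\supset F_1$, $W_2\supset F_2$ with $\overline{W_1}^{\beta X}\cap\overline{W_2}^{\beta X}=\varnothing$. The family $\{\overline{U}^{\beta X}:y\in\mathrm{e}(U)\}$ is downward directed (because $\mathrm{e}(U\cap V)=\mathrm{e}(U)\cap\mathrm{e}(V)$ and $\overline{U\cap V}^{\beta X}\subset\overline{U}^{\beta X}\cap\overline{V}^{\beta X}$) and its intersection $r(y)$ is contained in the open set $W_1\cup W_2$; since $\beta X$ is compact, some member already lies in $W_1\cup W_2$, i.e.\ there is an open $U_0\subset X$ with $y\in\mathrm{e}(U_0)$ and $\overline{U_0}^{\beta X}\subset W_1\cup W_2$. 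Put $U_0^{(i)}=U_0\cap W_i$; then $U_0=U_0^{(1)}\sqcup U_0^{(2)}$, each $U_0^{(i)}$ is open in $X$ and clopen in $U_0$, and $\overline{U_0^{(1)}}^{\beta X}\cap\overline{U_0^{(2)}}^{\beta X}=\varnothing$ since $\overline{U_0^{(i)}}^{\beta X}\subset\overline{W_i}^{\beta X}$.

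The heart of the argument is the splitting of the witnessing function. Choose $h\in C_{U_0}$ with $u(h)(y)>1$. Since $U_0^{(i)}$ is clopen in $U_0$, its boundary $\partial^X U_0^{(i)}$ lies in $X\setminus U_0$, where $h=1$; hence the function $h_i$ equal to $h$ on $\overline{U_0^{(i)}}^X$ and to $1$ on $X\setminus U_0^{(i)}$ is well defined and continuous (pasting lemma for the closed cover $\overline{U_0^{(i)}}^X\cup(X\setminus U_0^{(i)})=X$), has $h_i\geq 1$ and $h_i=1$ off $U_0^{(i)}$, so $h_i\in C_{U_0^{(i)}}$; and a short verification on $U_0^{(1)}$, on $U_0^{(2)}$ and on $X\setminus U_0$ shows $h=\max\{h_1,h_2\}$. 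As $u$ preserves $\max$, $u(h)=\max\{u(h_1),u(h_2)\}$, so $u(h)(y)>1$ forces $u(h_1)(y)>1$ or $u(h_2)(y)>1$; say $u(h_1)(y)>1$. Then $y\in\mathrm{e}(U_0^{(1)})$, whence $r(y)\subset\overline{U_0^{(1)}}^{\beta X}\subset\overline{W_1}^{\beta X}$, which is disjoint from $W_2\supset F_2$ — contradicting $\varnothing\neq F_2\subset r(y)$. Thus $r(y)$ is connected and $r|_G$ is the required map. I expect the main obstacle to be precisely this splitting lemma: one must take $W_1,W_2$ with disjoint \emph{closures} in $\beta X$, not merely disjoint, so that $\partial^X U_0^{(i)}\subset X\setminus U_0$ and the pasted functions $h_i$ are continuous and land in $C_{U_0^{(i)}}$, and one must notice that membership of $y$ in $\mathrm{e}(U_0)$ is detected by a single $h$ that can be written as $\max\{h_1,h_2\}$ exactly because $u$ preserves \emph{both} lattice operations — preservation of $\min$ is what constructs $r$ via Theorem \ref{m}, while preservation of $\max$ is what localizes the witness and yields connectedness. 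Everything else is routine bookkeeping with the operator $\mathrm{e}$.
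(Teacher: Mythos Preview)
Your argument is correct and essentially coincides with the paper's own proof: the paper carries out the case of condition (ii), defines the same $G$ and the same operator $\mathrm{e}$ via Theorem \ref{m}, and for connectedness splits a witnessing $h_0\in C_{(U_1\cup U_2)\cap X}$ as $h_0=\min\{h_1,h_2\}$ using that $u$ preserves $\min$, then derives the same contradiction $r(y_0)\subset\overline{U_j}^{\beta X}$; your version is precisely the dual the paper alludes to with ``similar arguments work when $u$ satisfies condition (i)''. The only cosmetic difference is that the paper passes directly to $y_0\in\mathrm{e}((U_1\cup U_2)\cap X)$ via monotonicity of $\mathrm{e}$, whereas you first extract a single $U_0$ by directed compactness before splitting --- both routes yield the same witness.
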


\begin{proof}
Suppose  $u:C^*(X)\rightarrow C^*_{usc}(Y)$ is an extender
satisfying condition $(ii)$. We define the operator
$\mathrm{e}\colon\mathcal T_X\to\mathcal T_Y$ as in the proof of
Theorem 4.3, implication $(iii)\Rightarrow (i)$. Let
$G=\bigcup\{\mathrm{e}(U):U\in\mathcal T_X\}$ and
$r(y)=\bigcap\{\overline{U}^{\beta X}:y\in\mathrm{e}(U)\}$ for all
$y\in G$. We need to show that the values of $r$ are connected.

Suppose $r(y_0)$ is not connected for some $y_0\in G$. So, there are
two non-empty open sets $U_1, U_2$ in $\beta X$ with disjoint
closures such that $r(y_0)\cap U_j\neq\varnothing$, $j=1,2$, and
$r(y_0)\subset U_1\cup U_2$. Fix finitely many open sets
$W_i\subset\beta X$, $i=1,..,k$, with
$y_0\in\bigcap_{i=1}^{i=k}\mathrm{e}(W_i\cap X)$ and
$r(y_0)\subset\bigcap_{i=1}^{i=k}\overline{W_i}^{\beta X}\subset
U_1\cup U_2$. Since $\bigcap_{i=1}^{i=k}\mathrm{e}(W_i\cap
X)=\mathrm{e}\big((\bigcap_{i=1}^{i=k}W_i)\cap X\big)$, we can
suppose that $y_0\in\mathrm{e}\big((U_1\cup U_2)\cap X\big)$. Then,
according to the definition of the operator $\mathrm{e}$, there
exists $h_0\in C_{(U_1\cup U_2)\cap X}$ with $y_0\in
u(h_0)^{-1}((-\infty,1))$. Therefore, $h_0(X)\subset (-\infty,1]$
and $h_0(x)=1$ for all $x\in X\backslash (U_1\cup U_2)$. Because
$U_1$ and $U_2$ have disjoint closures, $h_0=\min\{h_1,h_2\}$, where
$h_j(x)=h_0(x)$ if $x\in U_j\cap X$ and $h_j(x)=1$ if $x\not\in
U_j\cap X$, $j=1,2$. Hence,
$u(h_0)(y_0)=\min\{u(h_1)(y_0),u(h_2)(y_0)\}$. So, $y_0\in
u(h_j)^{-1}((-\infty,1))$ for some $j\in\{1,2\}$. Since $h_j\in
C_{U_j\cap X}$, we have $y_0\in\mathrm{e}(U_j\cap X)$. Consequently,
$r(y_0)\subset\overline{U_j}^{\beta X}$ which contradicts the fact
that $r(y_0)$ meets both $\overline{U_1}^{\beta X}$ and
$\overline{U_2}^{\beta X}$. So, the map $r$ has connected values.

Similar arguments work when $u$ satisfies condition $(i)$.
\end{proof}

\begin{cor}
Let $X$ be a compact connected subspace of a space $Y$ and $u$ is an
extender satisfying one of the conditions $(i)$ and $(ii)$ from
Proposition $5.1$. Then there exists an upper semi-continuous map
$r\colon Y\to X$ with compact connected values such that
$r(x)=\{x\}$ for all $x\in X$.
\end{cor}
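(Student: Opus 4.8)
The plan is to apply Proposition 5.1 and then extend the resulting partial retraction to all of $Y$ by the constant value $X$. First, since $X$ is compact and Tychonoff, its \v{C}ech--Stone compactification coincides with $X$, i.e. $\beta X = X$. Hence Proposition 5.1, applied to the given extender $u$, yields a neighbourhood $G$ of $X$ in $Y$ --- which is open by its construction in the proof of Proposition 5.1 --- together with an upper semi-continuous map $\rho\colon G\to\beta X = X$ whose values are compact connected subsets of $X$ and which satisfies $\rho(x)=\{x\}$ for all $x\in X$ (note that $X\subset G$).

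Next I would define $r\colon Y\to X$ by $r(y)=\rho(y)$ for $y\in G$ and $r(y)=X$ for $y\in Y\setminus G$. Since $X$ is compact and connected, every value of $r$ is a non-empty compact connected subset of $X$, and $r(x)=\rho(x)=\{x\}$ for $x\in X$. It then remains only to check that $r$ is upper semi-continuous, i.e. that $\{y\in Y:r(y)\subset U\}$ is open in $Y$ for every open $U\subset X$. If $U=X$ this set equals $Y$; if $U\neq X$, then no point of $Y\setminus G$ lies in it, since there $r(y)=X\not\subset U$, so the set coincides with $\{y\in G:\rho(y)\subset U\}$, which is open in $G$ by upper semi-continuity of $\rho$ and hence open in $Y$ because $G$ is open in $Y$. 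This finishes the argument.

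There is no serious obstacle here; the only points requiring a moment's care are the identification $\beta X=X$ for compact $X$ (which is precisely what lets Proposition 5.1 land inside $X$ rather than merely inside $\beta X$), the use of connectedness of $X$ so that the constant value $X$ is an admissible value, and the openness of $G$ in $Y$, which is what transfers upper semi-continuity of $\rho$ across the extension. One could equivalently phrase the extension as the map $Y\to\exp X$ sending $Y\setminus G$ to the single point $X$ and restricting to the map induced by $\rho$ on $G$, but the direct verification above is the shortest route.
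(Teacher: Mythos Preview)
Your argument is correct and follows essentially the same route as the paper: apply Proposition~5.1 (using $\beta X=X$ for compact $X$) to obtain the partial retraction on a neighbourhood $G$, then extend by the constant value $X$ on $Y\setminus G$. The paper's proof is terser and omits the explicit verification of upper semi-continuity and the remark that $G$ is open, both of which you supply; these are exactly the details needed to make the extension work.
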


\begin{proof}
By Proposition 5.1, there exists an upper semi-continuous retraction
$r_1\colon G\to X$ with non-empty compact connected values, where
$G$ is a neighborhood of $X$ in $Y$. Then the map $r\colon Y\to X$,
$r(x)=r_1(x)$ if $x\in G$ and $r(x)=X$ if $x\not\in G$, is the
required retraction.
\end{proof}

According to \cite{dr}, every compactum which is an upper
semi-continuous compact and connected valued retract of a Tychonoff
cube is an $AE(1)$. This result together with Corollary 5.2 yields
the the following one.

\begin{cor}
Let $X$ be a compact connected space such that for any embedding of
$X$ in another space there exists an extender $u$ satisfying one of
the conditions $(i)$ and $(ii)$ from Proposition $5.1$. Then $X\in
AE(1)$.
\end{cor}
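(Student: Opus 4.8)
The plan is to reduce the statement immediately to the Dranishnikov criterion quoted just above: every compactum that is an upper semi-continuous, compact-and-connected-valued retract of a Tychonoff cube belongs to $AE(1)$. So it suffices to produce such a retraction onto $X$.

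First I would embed $X$ into a Tychonoff cube $\mathbb I^\tau=[0,1]^\tau$ for a suitable cardinal $\tau$; this is possible because $X$ is a compact Hausdorff space. Applying the hypothesis to this particular embedding, we obtain an extender $u$ satisfying one of the conditions $(i)$, $(ii)$ of Proposition 5.1 with $Y=\mathbb I^\tau$.

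Next, since $X$ is compact and connected, Corollary 5.2 applies verbatim and yields an upper semi-continuous map $r\colon \mathbb I^\tau\to X$ with compact connected values such that $r(x)=\{x\}$ for all $x\in X$. The condition $r(x)=\{x\}$ on $X$ says precisely that $r$ is a set-valued retraction of $\mathbb I^\tau$ onto $X$; its values are non-empty compact connected subsets of $X$; and it is upper semi-continuous. Thus $X$ is an upper semi-continuous compact-and-connected-valued retract of the Tychonoff cube $\mathbb I^\tau$, and the cited result of Dranishnikov gives $X\in AE(1)$.

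There is essentially no serious obstacle here, since all the work has already been done in Proposition 5.1 and Corollary 5.2. The only point requiring a little care is the embedding step: one must embed $X$ specifically into a \emph{Tychonoff cube} (rather than an arbitrary ambient space) so that Dranishnikov's theorem becomes applicable, but this is immediate from the compactness of $X$.
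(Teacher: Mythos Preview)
Your argument is correct and is essentially the same as the paper's: the paper simply notes that Dranishnikov's criterion together with Corollary 5.2 yields the result, and you have spelled out the obvious intermediate step of embedding $X$ in a Tychonoff cube before invoking those two facts.
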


The last corollary leads to the following problem:

\begin{question}
Is there any topological description of the class of compacta $X$
such that for every embedding of $X$ in another space $Y$ there
exists an extender satisfying one of the conditions $(i)$ and $(ii)$
from Proposition 5.1.
\end{question}

M. Zarichnyi \cite{z} investigated the functor of idempotent
probability measures. For a compact space $X$ a functional
$\mu\colon C(X)\to\mathbb R$ is called an {\em idempotent measure}
if $\mu$ is normed, weakly additive and preserves $\max$. The space
$I(X)$ of all idempotent probability measures on $X$ is a compact
subspace of $\mathbb R^{C(X)}$. We say that a compactum $X$ is a
{\em Zarichnyi space} if for every embedding of $X$ in another space
$Y$ there exists a normed, weakly additive extender $u\colon C(X)\to
C(Y)$ which preserves $\max$. This is equivalent to the existence of
a map $\theta\colon Y\to I(X)$ such that $\theta(x)=\delta_x$ for
every $x\in X$. In particular, $X$ is a Zarichnyi space provided
$I(X)$ is an absolute retract. According to Corollary 3.5, every
$AE(1)$-compactum is a Zaricnyi space. But there exists a Zaricnyi
space which is not an $AE(1)$. Indeed, let $X$ be a metric infinite
compactum which is not locally connected. Then, by \cite[Theorem
5.3]{brz}, $I(X)$ is homeomorphic to $\mathbb I^{\omega}$.
Consequently, $X$ is a Zarichnyi space. Since $X$ is not locally
connected, $X\not\in AE(1)$. On the other hand, by Corollary 4.4,
any Zarichnyi space is an $AE(0)$.

\begin{question}
Is there any $AE(0)$-space which is not a Zarichnyi space?
\end{question}

Let us note that every compact metric spaces is a Zarichnyi space.
We already observed that for infinite metric compacta. In case $X$
is a finite set of cardinality $n$, then $I(X)$ is homeomorphic to
the $(n-1)$-dimensional simplex (see \cite{z}). Therefore, if there
exists an $AE(0)$-space which is not a Zarichnyi space, it should be
non-metrizable.

\bibliographystyle{amsplain}

\end{document}